\documentclass[11pt]{article}

\usepackage[margin=1in]{geometry}
\usepackage{amsmath,amssymb,amsthm}
\usepackage[hidelinks]{hyperref}

\newtheorem{theorem}{Theorem}[section]
\newtheorem{lemma}[theorem]{Lemma}
\newtheorem{corollary}[theorem]{Corollary}
\numberwithin{equation}{section}

\newcommand{\N}{\mathbb N}
\newcommand{\Z}{\mathbb Z}

\title{Density bounds for permutations avoiding monotone arithmetic progressions}
\author{Jesse Geneson}
\date{}

\begin{document}

\maketitle

\begin{abstract}
For $X\in\{\N,\Z\}$, let $\alpha_X(\ell)$ and $\beta_X(\ell)$ denote the
supremal upper and lower densities of subsets of $X$ admitting
$\omega$-permutations without monotone $\ell$-term arithmetic progressions.
We strengthen the published lower bounds for the three-term upper-density parameters
by proving
\[
 \alpha_{\N}(3)\geq\frac23,\qquad
 \alpha_{\Z}(3)\geq\frac23.
\]
We also prove $\beta_{\Z}(4)=1$ by constructing four-permutable subsets of
the integers whose lower symmetric densities approach one.
\end{abstract}

\noindent\textbf{Keywords:} arithmetic progression; permutation; upper density;
lower density.

\medskip
\noindent\textbf{2020 Mathematics Subject Classification:} 05A05, 11B25.

\section{Introduction}

Throughout the paper, $\N=\{1,2,\ldots\}$.  An \emph{$\omega$-permutation}
of a countably infinite set $S$ is a sequence $p_0,p_1,\ldots$ of elements
of $S$ in which every element of $S$ occurs exactly once.  It contains a
monotone arithmetic progression of length $\ell$ if there are
$i_0<i_1<\cdots<i_{\ell-1}$, $x\in\Z$, and $d\in\Z\setminus\{0\}$ such that
\[
                         p_{i_j}=x+jd
                         \qquad(0\leq j<\ell).
\]
We call $S$ $\ell$-permutable if it has an $\omega$-permutation containing
no such progression.

For $S\subseteq\N$ and $T\subseteq\Z$, define
\[
\begin{aligned}
 \overline d_{\N}(S)&=\limsup_{n\to\infty}
       \frac{|S\cap[1,n]|}{n},&
 \underline d_{\N}(S)&=\liminf_{n\to\infty}
       \frac{|S\cap[1,n]|}{n},\\
 \overline d_{\Z}(T)&=\limsup_{n\to\infty}
       \frac{|T\cap[-n,n]|}{2n+1},&
 \underline d_{\Z}(T)&=\liminf_{n\to\infty}
       \frac{|T\cap[-n,n]|}{2n+1}.
\end{aligned}
\]
For $X\in\{\N,\Z\}$, put
\[
\begin{aligned}
 \alpha_X(\ell)&=\sup\{\overline d_X(S):S\subseteq X
       \text{ is infinite and $\ell$-permutable}\},\\
 \beta_X(\ell)&=\sup\{\underline d_X(S):S\subseteq X
       \text{ is infinite and $\ell$-permutable}\}.
\end{aligned}
\]
These parameters require an order of type $\omega$.  A doubly infinite
permutation indexed by $\Z$ or an arbitrary linear order is a different
object.  Ardal, Brown, and Jungi\'c
\cite[Theorem~2.2]{ArdalBrownJungic} constructed a linear order on all of
$\Z$ containing no monotone three-term progression.

Entringer and Jackson \cite{EntringerJackson} asked whether, for every $n$,
the set $\{0,1,\ldots,n\}$ has a permutation containing no monotone
three-term arithmetic progression.  Davis, Entringer, Graham, and Simmons
\cite[Facts~3 and 4]{DavisEtAl} proved that every $\omega$-permutation of
$\N$ contains a monotone three-term progression and constructed one
containing no monotone five-term progression.  They asked for the supremal
upper and lower densities of three-permutable subsets of $\N$ and posed
analogous questions for $\Z$ \cite[Questions~4 and 5]{DavisEtAl}.

LeSaulnier and Vijay
\cite[Theorem~3 and p.~207]{LeSaulnierVijay} proved
$\alpha_{\N}(3)\geq1/2$ and $\beta_{\N}(3)\geq1/4$, and conjectured
equality in both bounds.  Geneson
\cite[Introduction and Proposition~4]{Geneson} defined
the two-sided parameters and proved $\alpha_{\Z}(3)\geq1/2$ and
$\beta_{\Z}(3)\geq1/6$.  Adenwalla \cite[Theorem~8]{Adenwalla} improved
the latter bound to $\beta_{\Z}(3)\geq3/10$.

For length four, LeSaulnier and Vijay
\cite[Theorem~3]{LeSaulnierVijay} proved $\alpha_{\N}(4)=1$ and
$\beta_{\N}(4)\geq1/3$, Geneson \cite[Proposition~3]{Geneson} improved
the latter bound to $1/2$, and Adenwalla
\cite[Theorems~6 and 7]{Adenwalla} proved
$\beta_{\N}(4)=\alpha_{\Z}(4)=1$ and $\beta_{\Z}(4)\geq2/3$.
Adenwalla \cite[Question~3]{Adenwalla} asked for the correct values of the
four length-three parameters and $\beta_{\Z}(4)$.  The published results
and the results here are as follows.
\[
\begin{array}{c|c|c}
\text{parameter}&\text{published result}&\text{result here}\\ \hline
\alpha_{\N}(3)&\geq1/2&\geq2/3\\
\alpha_{\Z}(3)&\geq1/2&\geq2/3\\
\beta_{\Z}(4)&\geq2/3&=1
\end{array}
\]

\begin{theorem}\label{thm:three-bounds}
\[
 \alpha_{\N}(3)\geq\frac23,\qquad
 \alpha_{\Z}(3)\geq\frac23.
\]
\end{theorem}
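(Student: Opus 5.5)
The plan is to build $S$ as a sparse union of long intervals, each of which is nearly as long as the initial segment ending at its right endpoint. Then to build the $\omega$-permutation block by block, so that only finitely many elements are involved at each stage. Concretely, take $m_1<m_2<\cdots$ growing very fast, say $m_{k+1}\ge 100\,m_k$, and let
\[
S=\bigcup_k B_k,\qquad B_k=[m_k,3m_k]\cap\N,
\]
possibly minus a set of $o(m_k)$ elements near the ends, to be removed if the in-block ordering below needs it. At $n=3m_k$ we get
\[
|S\cap[1,n]|/n\ge (2m_k-o(m_k))/(3m_k)\to 2/3,
\]
so $\overline d_{\N}(S)\ge 2/3$. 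The permutation lists all of $B_1$, then all of $B_2$, and so on, each block in a carefully chosen finite order.

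First I classify monotone $3$-APs $x<y<z$, $x+z=2y$, whose terms lie in at least two blocks. If $z\in B_j$ then $y\ge z/2\ge m_j/2>3m_{j-1}$, so $y\in B_j$ as well. Hence every cross-block AP has exactly $x$ in an earlier block and $y,z$ in the same block $B_k$ with $2y-z\le 3m_{k-1}=:e$. Since $x$ is listed before $y$ and $z$, the AP is monotone only in increasing order, that is, if $y$ precedes $z$. A descending occurrence would need $x$ last, which is impossible.

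So it suffices to order each $B_k$ so that two things hold. First, $B_k$ contains no monotone $3$-AP internally. Second, whenever $y<z$ in $B_k$ with $z\ge 2y-e$, the element $z$ comes before $y$. Such pairs force $z\ge 2m_k-e$. If $y\ge 2m_k-e$, then $z\ge 4m_k-3e>3m_k$, which is impossible. Therefore it is enough to list first
\[
C=[2m_k-e,3m_k]\qquad\text{and then}\qquad A=[m_k,2m_k-e).
\]

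The main obstacle is the finite problem of ordering $C$ and then $A$ with no monotone $3$-AP inside $B_k$. Each of $C$ and $A$ is an interval, so the Davis--Entringer--Graham--Simmons/Entringer--Jackson finite AP-free permutations handle APs lying inside one part. The dangerous APs are the mixed ones: $x\in C$ with $y,z\in A$ descending, or $x,y\in C$ with $z\in A$ descending.

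My first attempt would use parity and binary splitting, recursively, on the descending mixed patterns. I would order $A$ so that no descending pair $y>z$ with $y$ first has $2y-z\in C$, and order $C$ so that no descending pair $x>y$ in $C$ with $x$ first has $2y-x\in A$. For example, list the elements of $A$ and of $C$ in ``bit-reversal'' order relative to their left endpoints. I would then check that the relevant differences force a parity clash.

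If this fails, I would sacrifice $o(m_k)$ elements, for instance by splitting $C$ and $A$ into a bounded number of sub-intervals ordered to kill the few residual patterns. Alternatively I would shrink $C$ slightly. Both changes keep the density limit at $2/3$.

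For $\Z$, use the same scheme with blocks $B_k\cup(-B_k)$, ordering $-B_k$ by negating the order of $B_k$. The density at $n=3m_k$ is
\[
2(2m_k-o(m_k))/(6m_k+1)\to 2/3.
\]
The one new case is APs that straddle $0$ or mix signs. These are handled by the same growth argument: for terms of opposite sign, the middle term is forced into a much smaller block, or the AP is excluded by the separation of blocks. The remaining same-sign cases reduce to the $\N$ case by symmetry, giving $\alpha_{\Z}(3)\ge 2/3$.
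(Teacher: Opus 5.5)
There is a genuine gap, and it sits at the step you yourself call the main obstacle. You never produce a valid order of $B_k$. Worse, the reduction you propose, listing all of $C=[2m_k-e,3m_k]$ before all of $A=[m_k,2m_k-e)$, fails whatever internal orders you give $C$ and $A$.

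Put $s=2m_k-e$. The elements $s-1,s-2,s-4\in A$ come after $s,s+1,s+2,s+4\in C$.
\begin{itemize}
\item The progressions $(s-1,s,s+1)$, $(s-2,s,s+2)$ and $(s-4,s,s+4)$ force $s$ to precede $s+1$, $s+2$ and $s+4$.
\item Given that, $(s,s+1,s+2)$ forces $s+2$ before $s+1$, and $(s,s+2,s+4)$ forces $s+4$ before $s+2$.
\item The progression $(s-2,s+1,s+4)$, with difference $3$, forces $s+1$ before $s+4$.
\end{itemize}
This is a cycle, so parity or bit-reversal orderings cannot help.

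The fallbacks do not rescue the scheme either. Take $d\le m_k/6$ and any $s\in[2m_k-e,\,2m_k-e+d)$. The seven points $s+td$, $t\in\{-4,-2,-1,0,1,2,4\}$, form the same configuration: the points with $t<0$ lie in $A$ and the others lie in $C$. These $d$ configurations are pairwise disjoint. So deleting $o(m_k)$ elements, or opening an $o(m_k)$ gap between $A$ and $C$, leaves one intact. Reshuffling subintervals is also no help, as long as every element of $C$ still precedes every element of $A$, since the argument uses nothing about the internal orders.

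With single ratio-$3$ intervals, cross-block progressions $(x,y,2y-x)$ are unavoidable. You would therefore need a genuinely different interleaving of $B_k$, and the proposal supplies none. Your $\Z$ sketch inherits this gap. It also leaves unspecified how $B_k$ and $-B_k$ are interleaved, which matters for in-block progressions such as $(-m_k,m_k,3m_k)$.

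The idea you are missing is to choose the block geometry so that cross-block progressions disappear altogether. The paper takes
\[
B_k=\bigcup_{j=0}^k[L4^j+M,\,2L4^j],\qquad M=M_{k-1},\quad L=4M.
\]
If $a\le M$ and $y$ lies in the $j$th interval, then $2y-a\in[2L4^j+M,\,4L4^j)$, which falls in the gap before the next interval. Two earlier terms cannot reach $B_k$ at all (Lemma~3.1). Hence any finite three-AP-free order on each block suffices. The density at $M_k=2L4^k$ still tends to $2/3$, because $\sum_{j\le k}L4^j\approx\frac43L4^k$.

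For $\Z$, the paper uses the symmetric intervals $[L4^j+M,\,2L4^j-M]$ together with their negatives. The only cross-block progressions then have their midpoint in the earlier block, so they cannot be monotone.
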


The first inequality disproves the equality conjecture of
LeSaulnier and Vijay. The next construction determines the remaining length-four parameter.

\begin{theorem}\label{thm:four-family}
Let $m\geq2$ be a power of two, let $a\geq4$ be an integer, and let
$c\in\mathbb R$ satisfy
\[
                         2+\frac1a<c<a.
\]
There is a $4$-permutable set $S_{m,a,c}\subseteq\Z$ such that
\[
 \underline d_{\Z}(S_{m,a,c})
       =1-\frac{a(c-1)}{2mc(a-1)}.
\]
\end{theorem}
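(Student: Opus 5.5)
We have to construct $S_{m,a,c}$, compute its lower symmetric density, and prove it is $4$-permutable. First note what the density formula suggests: the missing proportion $\frac{a(c-1)}{2mc(a-1)}$ is $\frac1{2m}$ times $\frac{a}{a-1}\cdot\frac{c-1}{c}$. This factorization points to a set built in geometric scales. We pass from an interval of length $L_k$ to one of length $L_{k+1}\approx cL_k$. In the new annulus $[-L_{k+1},L_{k+1}]\setminus[-L_k,L_k]$ we delete a sparse set of density about $\frac1{2m}$. The factor $\frac{a}{a-1}$ comes from a geometric sum $1+\frac1a+\frac1{a^2}+\cdots$ of nested deletions (or from an $a$-adic refinement), and $\frac{c-1}{c}$ is the fraction of the window occupied by the newest annulus. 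Accordingly, the plan is to define $S_{m,a,c}$ as $\Z$ minus a union over stages $k$. At stage $k$ we remove, from the $k$-th annulus, one residue class modulo $m$ (or a union of such classes on subblocks of relative sizes $a^{-j}$), with the stage boundaries $L_k\sim c^k$. We then compute $|S\cap[-n,n]|/(2n+1)$ for $L_k\le n\le L_{k+1}$. The ratio is piecewise monotone in $n$, and the minimum occurs at $n=L_{k+1}$ in the limit. This gives exactly $1-\frac{a(c-1)}{2mc(a-1)}$. The hypothesis $2+1/a<c<a$ should be exactly what forces the infimum to be attained at the endpoints rather than mid-annulus. We would verify this with a short calculus check.

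For $4$-permutability we would build the $\omega$-permutation block by block. We list the elements of $[-L_1,L_1]\cap S$ first, then the new elements of each successive annulus. Within an annulus, the order uses the power-of-two structure of $m$. Here we take as given the classical construction of Davis et al.\ (recalled in the introduction) of an ordering of $\N$ with no monotone $5$-term progression. More precisely, we take the bit-reversal / even–odd recursive ordering of a finite interval of length a power of two, which has no monotone $3$-term AP, as in the Entringer–Jackson answer. Each new annulus is split into its positive and negative halves. Each half is an interval of integers with a residue class mod $m$ missing, and it is ordered by the even–odd recursion so that it internally avoids monotone $3$-APs.

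The main obstacle is controlling progressions that straddle several blocks. Because blocks are listed in order of increasing absolute value, a monotone $4$-AP $x,x+d,x+2d,x+3d$ whose terms lie in blocks that are nondecreasing in the permutation order has a constrained pattern of absolute values. At most two terms can lie in a single block, and we use the deleted residue class mod $m$ to kill the remaining configurations. A progression with three terms in earlier blocks and the fourth in a later annulus forces the fourth term to be congruent to a specific class, which we arrange to be the deleted one. The same holds symmetrically for the negative side. The case analysis here is the hard part. In particular, sign-changing progressions that jump from the negative half to the positive half need the gap condition $c>2+1/a$: a $4$-AP with first three terms in $[-L_k,L_k]$ cannot reach the $(k+1)$-st annulus unless it hits the deleted class. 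Meanwhile $c<a$ keeps the $a$-adic sub-blocks aligned. I would first work out the case $m=2$ completely, then extend to general powers of two by iterating the even–odd splitting $\log_2 m$ times.
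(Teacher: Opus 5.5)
There is a genuine gap. The construction is never actually fixed, and the mechanism you propose for cross-block progressions does not work.

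\textbf{The residue-class mechanism fails.} Your key claim is that a progression with three terms in earlier blocks forces its fourth term into a prescribed residue class, which you then delete. This is false. The fourth term is $x_0+3d$, and its residue modulo $m$ varies with $x_0$ and $d$. For instance, when $m\mid d$ it equals the common residue of the three earlier terms, which lie in $S$. So one deleted class per annulus cannot catch these progressions.

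\textbf{Your blocks admit monotone four-term progressions.} Half-annulus blocks $(L_k,L_{k+1}]$ with $L_{k+1}\approx cL_k$ that contain all but one residue class fail directly. Take $x_2$ slightly above $L_{k+1}$ and $d$ slightly above $x_2/2$. Then:
\begin{itemize}
\item $x_0=x_2-2d$ is a small negative integer in the first block;
\item $x_1=x_2-d\approx L_{k+1}/2$ lies in annulus $k$, because $c>2$;
\item $x_3=x_2+d\approx\frac32L_{k+1}<L_{k+2}$ lies with $x_2$ in the positive half of annulus $k+1$.
\end{itemize}
The progression is monotone as soon as $x_2$ precedes $x_3$ inside that half. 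For an even--odd order with odd $d$, this is a single binary-digit condition on $x_2$. Once $m$ is moderately large, you can meet it while keeping all four terms outside the deleted classes. Your density heuristic is also off: deleting one class from each full annulus leaves missing density about $1/(2m)$ per sign, not $\frac{a(c-1)}{2mc(a-1)}$. The geometric ratio in the formula is $a$, not $c$.

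\textbf{The missing idea.} Each block should be a \emph{single} residue class modulo $m$ of one sign, spread over magnitudes $[a^f,a^{f+2m}/c)$. The lower exponent $f=2ms+K-q$ is staggered by the block's binary rank $q$. The blocks of a stage are listed as $N_{s,0},\dots,N_{s,m-1},P_{s,m-1},\dots,P_{s,0}$, and each block is ordered by $\prec$ on $(x-r)/m$, reversed for positive blocks. This gives the following controls:
\begin{itemize}
\item A later same-sign block of no larger rank starts at least a factor $c$ beyond the earlier block's outer endpoint $U_f$, and a factor $ac$ beyond when the rank strictly drops.
\item Such a drop is incompatible with shrinking steps, and with growing steps inside a same-sign run, where $|x_{j+1}|<2|x_j|$.
\item The zigzag property of $\rho_m$ forces a strict drop at one of any two consecutive steps when $m\nmid d$.
\item The hypothesis $c>2+1/a$ is exactly what prevents the step after a sign change from entering a later block of no larger rank. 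This is the configuration of the example above: $|x_2|=2|x_1|+|x_0|<(2+1/a)U_f<cU_f$.
\end{itemize}
The only surviving patterns are of two kinds. Three consecutive terms in one block are excluded by the three-AP-freeness of $\prec$. A $2+2$ pattern with opposite signs and equal residue is excluded by $u\prec u+h\iff u+2h\prec u+3h$ together with the reversed orientation on positive blocks.

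\textbf{The density.} Outside $[-a^{2m},a^{2m}]$, the complement is exactly one gap $[a^e/c,a^e)$, of one sign and one residue class, for each $e\geq 2m+1$. This yields the stated lower density. The hypothesis $c<a$ is used for disjointness of the gaps and for bounding the missing count at intermediate $n$.
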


\begin{corollary}\label{cor:all-four}
For every integer $\ell\geq4$,
\[
 \alpha_{\N}(\ell)=\beta_{\N}(\ell)
 =\alpha_{\Z}(\ell)=\beta_{\Z}(\ell)=1.
\]
\end{corollary}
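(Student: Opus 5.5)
The corollary is deduced from Theorem~\ref{thm:four-family} together with the published results quoted in the introduction. The key tools are monotonicity in $\ell$ and the trivial upper bounds.

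\textbf{Step 1 (reduction to $\ell=4$).} If an $\omega$-permutation contains no monotone $4$-term progression, then it contains no monotone $\ell$-term progression for any $\ell\geq4$. Indeed, the first four terms of a monotone $\ell$-term progression already form a monotone $4$-term progression. Hence every $4$-permutable set is $\ell$-permutable for all $\ell\ge4$, and each of the four parameters is nondecreasing in $\ell$. All densities are at most $1$, so every parameter is at most $1$. It therefore suffices to show that all four parameters at $\ell=4$ are at least $1$.

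\textbf{Step 2 (the $\Z$ parameters).} Fix $a=4$ and $c=3$; these satisfy $2+\tfrac14<3<4$. Let $m$ run through the powers of two. Theorem~\ref{thm:four-family} gives $4$-permutable sets $S_{m,4,3}\subseteq\Z$ with
\[
 \underline d_{\Z}(S_{m,4,3})=1-\frac{4\cdot 2}{2m\cdot3\cdot3}=1-\frac{4}{9m}.
\]
This tends to $1$ as $m\to\infty$, so $\beta_{\Z}(4)\ge1$. Since the upper density of a set is at least its lower density, we also get $\alpha_{\Z}(4)\ge\beta_{\Z}(4)=1$. Adenwalla's result gives this inequality directly as well. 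These sets are infinite because their lower density is positive.

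\textbf{Step 3 (the $\N$ parameters).} We use the published result $\beta_{\N}(4)=1$ of Adenwalla \cite[Theorems~6 and 7]{Adenwalla}, together with $\alpha_{\N}(4)\ge\beta_{\N}(4)$.

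If a self-contained argument is preferred, one can instead try to obtain sets in $\N$ from $S_{m,a,c}$. Consider $S\cap\N$ listed in the order induced by the $\omega$-permutation of $S$. This is still an $\omega$-permutation, since a subsequence of an $\omega$-permutation restricted to a subset lists each element once. It also avoids monotone $4$-term progressions. However, the symmetric lower density on $\Z$ does not directly control the one-sided density. This is the only delicate point, which is why citing Adenwalla for the $\N$ case is the intended route.

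Combining Steps 1--3 yields the corollary.
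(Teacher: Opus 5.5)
Your proposal is correct and is essentially the paper's proof. Both arguments instantiate Theorem~\ref{thm:four-family} with $a=4$ and let the powers of two $m\to\infty$ to get $\beta_{\Z}(4)=1$; your choice $c=3$ in place of the paper's $c=5/2$ is immaterial, and your value $1-4/(9m)$ is correct. Both then cite Adenwalla for $\beta_{\N}(4)=1$ and finish with upper density $\geq$ lower density, monotonicity in $\ell$, and the trivial bound $1$.
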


The equalities in Corollary~\ref{cor:all-four} are supremum statements.
Theorem~\ref{thm:four-family} does not produce a four-permutable subset of
the integers of lower symmetric density one or a four-progression-free
$\omega$-permutation of all of $\N$ or $\Z$.  The exact values of the four
length-three parameters remain open.

\section{Binary orders}

The binary recursion is classical; see Davis et al. \cite[p.~81]{DavisEtAl}
and Ardal, Brown, and Jungi\'c
\cite[Definition~2.1]{ArdalBrownJungic}.  Nathanson \cite{Nathanson} proved
the power-of-two modular existence theorem, and Ardal, Brown, and Jungi\'c
\cite[Definition~2.2]{ArdalBrownJungic} constructed the compatible order on
$\Z$.  We record the comparison identities needed below.

For each integer $t\geq0$, represent every residue modulo $2^t$ by its
unique member of $\{0,1,\ldots,2^t-1\}$ and its $t$-digit binary
expansion.  Let
$\bar\rho_{2^t}:\Z/2^t\Z\longrightarrow\{0,1,\ldots,2^t-1\}$ assign to
each residue its rank when the representative's digits are read from least
significant to most significant, with $0$ before $1$ at the first
difference, and put
$\rho_{2^t}(z)=\bar\rho_{2^t}(z+2^t\Z)$ for $z\in\Z$.  Thus the residue
order modulo $4$ is $0,2,1,3$.  These ranks satisfy
\begin{equation}\label{eq:binary-recurrence}
\rho_{2^{j+1}}(z)=2^j(z\bmod2)
       +\rho_{2^j}\left(\left\lfloor\frac z2\right\rfloor\right)
       \qquad(j\geq0).
\end{equation}
Here $z\bmod2\in\{0,1\}$ is the least significant binary digit, and
$\lfloor z/2\rfloor$ removes it.
For a nonzero integer $z$, let $\nu_2(z)$ be the largest $v\geq0$ for
which $2^v$ divides $z$.

\begin{lemma}\label{lem:binary-residues}
Let $t\geq1$ and $m=2^t$.  If $r,\delta\in\Z$ and
$\delta\not\equiv0\pmod m$, then
\begin{align}
 \rho_m(r)<\rho_m(r+\delta)
   &\quad\Longleftrightarrow\quad
 \rho_m(r+\delta)>\rho_m(r+2\delta),\label{eq:binary-zigzag}\\
 \rho_m(r)<\rho_m(r+\delta)
   &\quad\Longleftrightarrow\quad
 \rho_m(r+2\delta)<\rho_m(r+3\delta).
   \label{eq:binary-pairs}
\end{align}
In particular, $\rho_m(r+\delta)$ is strictly smaller than both
$\rho_m(r)$ and $\rho_m(r+2\delta)$, or strictly larger than both.
\end{lemma}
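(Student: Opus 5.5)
Both equivalences come from one structural fact: the relative order of $\rho_m(x)$ and $\rho_m(y)$, for $x\not\equiv y\pmod m$, is determined by the lowest binary digit in which $x$ and $y$ differ. Iterating \eqref{eq:binary-recurrence} $v$ times writes $\rho_{2^t}(z)$ as a sum $\sum_{i<t}2^{t-1-i}b_i(z)$, where $b_i(z)$ is the $i$-th binary digit of $z$ (digits taken from the least significant end, with $\lfloor\cdot/2\rfloor$ extended to negative integers through the usual 2-adic expansion). So $\rho_m$ reverses the digit order. Consequently, if $x\not\equiv y\pmod m$ and $v=\nu_2(x-y)<t$, then
\[
\rho_m(x)<\rho_m(y)\iff b_v(x)=0,\ b_v(y)=1 .
\]
Indeed, $x$ and $y$ agree in digits $0,\dots,v-1$ and differ in digit $v$, which carries weight $2^{t-1-v}$. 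This weight exceeds the total weight $2^{t-1-v}-1$ of all later digits. I will first record this as a claim, proved by induction on $t$ using \eqref{eq:binary-recurrence}: if $x\bmod 2\neq y\bmod2$ the $2^{t-1}$ term decides, and otherwise one passes to $\lfloor x/2\rfloor,\lfloor y/2\rfloor$ with $\nu_2$ dropped by one.

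Next, let $v=\nu_2(\delta)$. The hypothesis $\delta\not\equiv0\pmod m$ gives $v<t$. All points $r+k\delta$ agree in digits below $v$, and consecutive points differ by $\delta$, so each consecutive difference has valuation exactly $v$. Their $v$-th digit is $b_v(r+k\delta)$. Because the lower digits of $r+k\delta$ are all equal, adding $\delta$ produces no carry into position $v$, and $\delta$ has a $1$ in position $v$. Hence $b_v(r+k\delta)$ alternates in $k$, taking the values $\epsilon,1-\epsilon,\epsilon,1-\epsilon$ for $k=0,1,2,3$, where $\epsilon=b_v(r)$.

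By the claim, $\rho_m(r)<\rho_m(r+\delta)$ holds if and only if $\epsilon=0$. Since the digits alternate, $\rho_m(r+\delta)>\rho_m(r+2\delta)$ holds if and only if $b_v(r+\delta)=1$, which is again $\epsilon=0$; this gives \eqref{eq:binary-zigzag}. Similarly $\rho_m(r+2\delta)<\rho_m(r+3\delta)$ holds if and only if $b_v(r+2\delta)=0$, i.e.\ $\epsilon=0$, which gives \eqref{eq:binary-pairs}. For the final sentence, apply \eqref{eq:binary-zigzag}, using that all three values are distinct because the three points are pairwise incongruent mod $m$ on the relevant comparisons. If $\rho_m(r)<\rho_m(r+\delta)$, then $\rho_m(r+\delta)$ is larger than both neighbours. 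Otherwise $\rho_m(r)>\rho_m(r+\delta)$, and \eqref{eq:binary-zigzag} gives $\rho_m(r+\delta)<\rho_m(r+2\delta)$, so it is smaller than both.

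The main obstacle is making the claim rigorous for negative integers and residues, i.e.\ checking that the digit description of $\rho_{2^t}$ agrees with the definition via representatives in $\{0,\dots,2^t-1\}$. I would handle this by noting that $\rho_{2^t}(z)$ depends only on $z \bmod 2^t$, and that both $z\bmod 2$ and $\lfloor z/2\rfloor \bmod 2^{t-1}$ are determined by $z\bmod 2^t$. Everything can therefore be computed on the representative in $[0,2^t)$, where the digits are ordinary binary digits. The same reduction justifies the carry-free alternation step.
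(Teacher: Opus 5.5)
Your proof is correct, but it is organized differently from the paper's. The paper inducts on $t$ directly on the two equivalences. When $\delta$ is odd, the parities of $r,r+\delta,r+2\delta,r+3\delta$ alternate, and the leading summand $2^{j}(z\bmod 2)$ of \eqref{eq:binary-recurrence} decides every comparison. When $\delta=2\delta'$, it subtracts the common leading summand and applies the statement to $\lfloor r/2\rfloor,\delta'$ at modulus $m/2$.

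You instead isolate a general criterion: for $x\not\equiv y\pmod m$, the comparison of $\rho_m(x)$ and $\rho_m(y)$ is decided by the binary digit in position $\nu_2(x-y)$. You then observe that along the progression the digit in position $v=\nu_2(\delta)$ alternates. The two arguments rest on the same mechanism: the paper's even case peels off, one at a time, exactly the $v$ low digits that you skip in one step. Your version explains the pattern rather than only verifying it. Alternation of digit $v$ makes the comparisons of $r+k\delta$ and $r+(k+1)\delta$ alternate for every $k$, so the analogue for progressions of any length comes for free. Your criterion is also precisely what underlies the order $\prec$ defined right after the lemma. The paper's route is shorter and never needs binary digits of negative integers; you must handle those by reducing to representatives in $\{0,\dots,2^t-1\}$, which you do correctly.

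Two small slips. First, the closed form comes from iterating \eqref{eq:binary-recurrence} $t$ times, not $v$ times. Second, in your last paragraph the three values need not all be distinct: if $2\delta\equiv0\pmod m$ then $\rho_m(r)=\rho_m(r+2\delta)$. Only the two consecutive comparisons must be strict, and that is all your argument actually uses.
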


\begin{proof}
We induct on $t$.  For $t=1$, the integer $\delta$ is odd, and the four
ranks are $0,1,0,1$ when $r$ is even and $1,0,1,0$ when $r$ is odd, so
both equivalences hold.  Suppose the assertion holds modulo $2^t$ and
consider modulus $2^{t+1}$.  If
$\delta$ is odd, the parities of
$r,r+\delta,r+2\delta,r+3\delta$ alternate.  By
\eqref{eq:binary-recurrence}, every odd term has rank at least $2^t$ and
every even term has rank less than $2^t$.  If $r$ is even, each side of
both equivalences is true; if $r$ is odd, each side is false.  Thus both
equivalences hold.
If $\delta=2\delta'$, all four terms have the same parity and
\[
 \left\lfloor\frac{r+k\delta}{2}\right\rfloor
 =\left\lfloor\frac r2\right\rfloor+k\delta'
 \qquad(0\leq k\leq3).
\]
Moreover, $2^{t+1}\nmid\delta$ implies $2^t\nmid\delta'$.  For
$0\leq k\leq3$, the first summand in \eqref{eq:binary-recurrence} is
$2^t((r+k\delta)\bmod2)$.  Since $\delta$ is even, it is independent of
$k$.  Subtracting this common summand from each rank comparison reduces
both equivalences to the induction hypothesis for $\lfloor r/2\rfloor$
and $\delta'$.  Since $\delta\not\equiv0\pmod m$, the residues
$r,r+\delta$ are distinct, as are $r+\delta,r+2\delta$, so the two
comparisons in \eqref{eq:binary-zigzag} are strict.  If the first comparison
is $<$, the second is $>$; otherwise both inequalities reverse.  Hence the
middle rank is strictly larger than both endpoint ranks or strictly smaller
than both.
\end{proof}

For distinct $u,v\in\Z$, put $q=1+\nu_2(v-u)$ and declare
\[
 u\prec v\quad\Longleftrightarrow\quad
 \rho_{2^q}(u)<\rho_{2^q}(v).
\]
This defines a strict total order on $\Z$.  It is the order of Ardal,
Brown, and Jungi\'c
\cite[Definitions~2.1 and 2.2]{ArdalBrownJungic}.

\begin{lemma}\label{lem:integer-binary-order}
For $u,h\in\Z$ with $h\neq0$, neither of the relations
\[
 u\prec u+h\prec u+2h
 \qquad\text{or}\qquad
 u+2h\prec u+h\prec u
\]
holds, and
\[
             u\prec u+h\quad\Longleftrightarrow\quad
             u+2h\prec u+3h.
\]
\end{lemma}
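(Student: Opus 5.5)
The plan is to reduce everything to Lemma~\ref{lem:binary-residues} by noticing that every comparison in the statement is between two terms that differ by exactly $h$. Put $v=\nu_2(h)$ and $q=1+v\geq1$, and let $m=2^q$.

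First, I check the moduli. For each of the consecutive pairs $(u,u+h)$, $(u+h,u+2h)$ and $(u+2h,u+3h)$, the difference is $h$. So the order $\prec$ compares each pair using the same rank function $\rho_{2^q}=\rho_m$. Explicitly, for $k=0,1,2$,
\[
 u+kh\prec u+(k+1)h\quad\Longleftrightarrow\quad \rho_m(u+kh)<\rho_m(u+(k+1)h).
\]
Moreover, $2^{q-1}\mid h$ but $2^q\nmid h$, so $h\not\equiv0\pmod m$. The hypotheses of Lemma~\ref{lem:binary-residues} therefore hold with $t=q$, $r=u$ and $\delta=h$.

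Next, I translate the chains. The relation $u\prec u+h\prec u+2h$ is a conjunction of two consecutive relations, so it never requires comparing $u$ with $u+2h$. That comparison would use a different modulus, namely $1+\nu_2(2h)=q+1$, but it never arises. In rank form the chain reads
\[
 \rho_m(u)<\rho_m(u+h)<\rho_m(u+2h).
\]
This contradicts \eqref{eq:binary-zigzag}, which forces $\rho_m(u+h)>\rho_m(u+2h)$ whenever $\rho_m(u)<\rho_m(u+h)$. Likewise, $u+2h\prec u+h\prec u$ says that $\rho_m(u+h)$ lies strictly between the other two ranks. This is excluded by the final sentence of Lemma~\ref{lem:binary-residues}.

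Finally, the equivalence $u\prec u+h\Leftrightarrow u+2h\prec u+3h$ is exactly \eqref{eq:binary-pairs} after the same translation. There is no real obstacle here. The only point needing care is the first step: verifying that all the relevant comparisons use the common modulus $2^{1+\nu_2(h)}$, and that $h$ is nonzero modulo this modulus.
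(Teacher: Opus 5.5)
Your proof is correct, but it takes a different route from the paper. The paper does not derive this lemma from Lemma~\ref{lem:binary-residues}. It cites Ardal, Brown, and Jungi\'c \cite[Theorem~2.2]{ArdalBrownJungic} for the two forbidden chains, and the case $s=2$, $t=3$ of Hirose and Saito \cite[Lemma~2.5]{HiroseSaito} for the equivalence.

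Your key observation is that every comparison in the statement is between two integers at distance exactly $|h|$. Hence all of them are decided by the single rank function $\rho_{2^{1+\nu_2(h)}}$, and $h\not\equiv0$ modulo that modulus. This reduces the lemma directly to \eqref{eq:binary-zigzag}, \eqref{eq:binary-pairs}, and the final sentence of Lemma~\ref{lem:binary-residues}, which the paper proves anyway for use in Section~5. For the reversed chain the difference is $-h$, but $\nu_2(-h)=\nu_2(h)$ and the definition of $\prec$ is symmetric in $u,v$, so the same modulus applies.

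What your route buys is self-containment. It needs neither the identification of the paper's $\prec$ with the Ardal--Brown--Jungi\'c order nor a check that Hirose--Saito's conventions match the paper's. It does not even use transitivity of $\prec$, which the paper asserts without proof. The citation route instead ties the lemma to published results that are more general; the paper uses only one case of Hirose--Saito's lemma.
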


\begin{proof}
The two forbidden chains follow from Ardal, Brown, and Jungi\'c
\cite[Theorem~2.2]{ArdalBrownJungic}.  The equivalence is the case $s=2$
and $t=3$ of Hirose and Saito \cite[Lemma~2.5]{HiroseSaito}, applied to
the identity map from $\Z$ to $(\Z,\prec)$.
\end{proof}

We call a finite order or an order of type $\omega$ \emph{three-AP-free} if
it contains no monotone three-term arithmetic progression.

\begin{corollary}\label{cor:finite-order}
Every finite set of integers has an order containing no monotone
three-term arithmetic progression.
\end{corollary}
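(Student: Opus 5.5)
Since every finite set $F\subseteq\Z$ sits inside $\Z$, the plan is to restrict the total order $\prec$ from Lemma~\ref{lem:integer-binary-order} to $F$. Concretely, list the elements of $F$ as $p_0,p_1,\ldots,p_{|F|-1}$ in increasing $\prec$-order; this is possible because $\prec$ is a strict total order on $\Z$ and $F$ is finite.

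Now suppose this listing contained a monotone three-term progression. Then there would be indices $i_0<i_1<i_2$, an integer $x$, and a nonzero $d$ with $p_{i_j}=x+jd$. Because the listing increases in $\prec$, we would get $x\prec x+d\prec x+2d$. Lemma~\ref{lem:integer-binary-order}, applied with $u=x$ and $h=d\neq0$, forbids exactly this chain. The other orientation, where the values decrease along the indices, is the case of negative $d$. It is covered by the same forbidden chain (taking $h=d<0$), or equivalently by the second forbidden chain after renaming $u=x+2d$ and $h=-d$. So no monotone three-term progression can occur.

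There is essentially no obstacle. The only point to check is that the definition of monotone progression used for finite orders agrees with the one given for $\omega$-permutations: increasing positions carrying $x,x+d,x+2d$ with $d\neq0$ of either sign. Lemma~\ref{lem:integer-binary-order} places no sign restriction on $h$, so both directions are handled.

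Alternatively, one could avoid the order on $\Z$ altogether. Translate $F$ into $\{0,\ldots,2^t-1\}$ for a large $t$ and order it by $\rho_{2^t}$. The "in particular" clause of Lemma~\ref{lem:binary-residues} then gives the conclusion, because differences within the translated set are less than $2^t$ in absolute value and hence nonzero modulo $2^t$. The restriction argument above is shorter, so I would use it.
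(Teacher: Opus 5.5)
Your argument is correct and is the paper's: the corollary follows by restricting the Ardal--Brown--Jungi\'c order $\prec$ on $\Z$ to the finite set, since the forbidden chains of Lemma~\ref{lem:integer-binary-order} rule out monotone three-term progressions for either sign of the difference. Your alternative, which translates into $\{0,\ldots,2^t-1\}$ and orders by $\rho_{2^t}$ using the ``in particular'' clause of Lemma~\ref{lem:binary-residues}, is also valid.
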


This follows by restricting the order of Ardal, Brown, and Jungi\'c
\cite[Definition~2.2 and Theorem~2.2]{ArdalBrownJungic}; we use it for every
finite block below.

\section{Positive upper density}

\begin{lemma}\label{lem:positive-extension}
Let $M$ and $L$ be positive integers with $L>2M$, let
$A\subseteq[1,M]\cap\Z$, and let $J$ be a nonnegative integer.  Put
\begin{equation}\label{eq:positive-extension-block}
 B=\bigcup_{j=0}^{J}
       [L4^j+M,2L4^j]\cap\Z.
\end{equation}
No three-term arithmetic progression contained in $A\cup B$ meets both
$A$ and $B$.
\end{lemma}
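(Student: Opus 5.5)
A three-term progression $x<y<z$ with $x+z=2y$ is unordered, so we may list its terms increasingly. Suppose it lies in $A\cup B$ and meets both sets. Since $A\subseteq[1,M]$ and every element of $B$ is at least $L+M>M$, every element of $A$ is smaller than every element of $B$. Hence either $x\in A$ and $y,z\in B$, or $x,y\in A$ and $z\in B$. In each case I will derive a contradiction from the identity $z=2y-x$ (or $x=2y-z$) and the fact that the gaps between consecutive blocks of $B$ are large compared with $M$.

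\emph{Case $x,y\in A$, $z\in B$.} Here $z=2y-x\le 2M-1<L+M$, so $z$ is smaller than the least element of $B$. This is a contradiction, and this case is immediate.

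\emph{Case $x\in A$, $y,z\in B$.} Write $y\in[L4^j+M,2L4^j]$. Then $z=2y-x$ satisfies
\[
 2L4^j+2M-x\le z\le 4L4^j-x .
\]
Since $1\le x\le M$, we get $2L4^j+M\le z\le L4^{j+1}-1$. Thus $z$ is strictly larger than the top $2L4^j$ of block $j$ and strictly smaller than the bottom $L4^{j+1}+M$ of block $j+1$, so it lies in the gap between them. The blocks are increasing in $j$ and disjoint, because $2L4^j<L4^{j+1}+M$, so the gaps between consecutive blocks contain no element of $B$. This contradicts $z\in B$. When $j=J$, the element $z$ exceeds $\max B$, which again contradicts $z\in B$.

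The only step needing care is this second case, namely checking the gap bounds: the lower bound uses $y\ge L4^j+M$ together with $x\le M$, and the upper bound uses $y\le 2L4^j$ together with $x\ge1$. The hypothesis $L>2M$ is not actually needed beyond ensuring that the blocks are nonempty and lie above $A$.
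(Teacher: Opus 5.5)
Your proof is correct and is essentially the paper's argument. You place the $A$-terms first, rule out two terms in $A$ because $2y-x\le 2M-1$ lies below $\min B$, and in the one-term case you trap $z=2y-x$ in $[2L4^j+M,\,L4^{j+1}-1]$, strictly between the $j$th and $(j+1)$st intervals, or above $\max B$ when $j=J$.
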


\begin{proof}
Every member of $B$ is larger than $M$.  Suppose that two terms of a
progression lie in $A$ and one lies in $B$.  The term in $B$ must be the
largest term.  If the two terms in $A$ are $u<v$, the largest possible third
term is $2v-u\leq2M-1$, while the least member of $B$ is $L+M>3M$.  This is
impossible.

It remains to consider a progression with one term $a\in A$ and two terms
$y<z$ in $B$.  Then $a<y<z$ and
\begin{equation}\label{eq:positive-reflection}
                              z=2y-a.
\end{equation}
If $y$ belongs to the $j$th interval in
\eqref{eq:positive-extension-block}, then
\[
                    L4^j+M\leq y\leq2L4^j.
\]
Since $1\leq a\leq M$, equation \eqref{eq:positive-reflection} gives
\[
                    2L4^j+M\leq z\leq4L4^j-1.
\]
Thus $z$ exceeds the greatest member $2L4^j$ of the $j$th interval.  If
$j<J$, then $z$ is smaller than the least member $4L4^j+M$ of the
$(j+1)$st interval.  If $j=J$, then $z$ exceeds every member of $B$.
In either case $z\notin B$.
\end{proof}

\begin{lemma}\label{lem:predecessor-count}
Let $<$ be an infinite strict total order on a set $X$, and suppose that
every $x\in X$ has finitely many predecessors.  The map
\[
                 p(x)=|\{y\in X:y<x\}|
\]
is an order-preserving bijection from $X$ to $\N\cup\{0\}$.
\end{lemma}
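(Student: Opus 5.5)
We need to show that $p$ is well defined, strictly order-preserving, injective, and surjective onto $\N\cup\{0\}$.

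\emph{Well-definedness and order preservation.} By hypothesis every predecessor set $P(x)=\{y\in X:y<x\}$ is finite, so $p(x)$ is a nonnegative integer. If $x<x'$, then transitivity gives $P(x)\subseteq P(x')$. Moreover $x\in P(x')\setminus P(x)$, using irreflexivity. Hence $p(x)<p(x')$. Since the order is total, distinct elements are comparable, so $p$ is strictly order-preserving and in particular injective.

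\emph{Surjectivity.} This is the only step needing care, and it is where infiniteness of $X$ enters. First I will show that the image of $p$ is downward closed. Take $x$ with $p(x)=n$. The finite set $P(x)$ is totally ordered by $<$, so list it as $y_0<y_1<\dots<y_{n-1}$. For each $k$, I claim $P(y_k)=\{y_0,\dots,y_{k-1}\}$:
\begin{itemize}
\item Each $y_i$ with $i<k$ lies below $y_k$.
\item Conversely, any $z<y_k$ satisfies $z<x$ by transitivity, so $z\in P(x)$. Then $z=y_i$ for some $i$, and $i<k$ by the ordering of the list.
\end{itemize}
Thus $p(y_k)=k$, so every value below $n$ is attained. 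Second, the image is infinite, because $p$ is injective and $X$ is infinite. An infinite downward-closed subset of $\N\cup\{0\}$ must be all of $\N\cup\{0\}$, since every $k$ lies below some attained value. Hence $p$ is surjective.

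Combining the two parts, $p$ is an order-preserving bijection onto $\N\cup\{0\}$. The order-preservation part is routine. The main points are the downward-closure argument, which rests on the identification $P(y_k)=\{y_0,\dots,y_{k-1}\}$, and the use of infiniteness to rule out a finite initial segment as the image.
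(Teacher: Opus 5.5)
Your proposal is correct and matches the paper's proof essentially step for step. Strict monotonicity comes from $P(x)\cup\{x\}\subseteq P(x')$, downward closure of the image from listing the predecessors of a fixed element and checking that the $k$th one has exactly $k$ predecessors, and surjectivity from injectivity together with $X$ being infinite.
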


\begin{proof}
If $x<y$, then every predecessor of $x$, and also $x$ itself, precedes
$y$.  Hence $p(x)<p(y)$.  Now fix $y$ with $p(y)=q$, and list its $q$
predecessors in increasing order as $y_0<\cdots<y_{q-1}$.  Every global
predecessor of $y_j$ also precedes $y$, so it is one of
$y_0,\ldots,y_{j-1}$; conversely, all these $j$ elements precede $y_j$.
Thus $p(y_j)=j$.  The image of $p$ is therefore an initial segment of
$\N\cup\{0\}$.  It is infinite because $X$ is infinite and $p$ is
injective, so it is all of $\N\cup\{0\}$.
\end{proof}

Put $S_0=\varnothing$ and $M_0=1$.  For $k\geq1$, define
\begin{equation}\label{eq:positive-upper-parameters}
                         L_k=4M_{k-1}
\end{equation}
and
\begin{equation}\label{eq:positive-upper-block}
 B_k=\bigcup_{j=0}^{k}
 [L_k4^j+M_{k-1},2L_k4^j]\cap\Z.
\end{equation}
Finally, put
\begin{equation}\label{eq:positive-upper-support}
 M_k=2L_k4^k,\qquad
 S_k=S_{k-1}\cup B_k,\qquad
 S=\bigcup_{k\geq1}B_k.
\end{equation}

Since $M_k=8M_{k-1}4^k>M_{k-1}$, induction gives
$S_k=\bigcup_{i=1}^kB_i\subseteq[1,M_k]$.  Every member of $B_k$ is larger
than $M_{k-1}$, so the blocks are pairwise disjoint.  Each block is finite
and nonempty, since its first interval contains $L_k+M_{k-1}$.  Hence $S$
is infinite.  By Corollary~\ref{cor:finite-order},
choose a three-AP-free
order $\sigma_k$ of the entire finite set $B_k$, and concatenate the
orders as
\begin{equation}\label{eq:positive-upper-order}
                         \sigma_1\sigma_2\sigma_3\cdots.
\end{equation}
This concatenation lists every member of $S$ exactly once, and every member
has only finitely many predecessors.  Lemma~\ref{lem:predecessor-count}
therefore makes it an $\omega$-permutation of $S$.

\begin{lemma}\label{lem:positive-upper-avoidance}
The $\omega$-permutation in \eqref{eq:positive-upper-order} is
three-AP-free.
\end{lemma}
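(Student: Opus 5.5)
Suppose, for contradiction, that the concatenation \eqref{eq:positive-upper-order} contains a monotone three-term progression $x,x+d,x+2d$ (with $d\neq0$), whose terms appear in that order of positions. Each term lies in exactly one block $B_k$, since the blocks are pairwise disjoint. I will split into cases according to how many distinct blocks the three terms occupy.

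\emph{One block.} If all three terms lie in a single block $B_k$, then they appear, in the stated order, inside the segment $\sigma_k$. But $\sigma_k$ was chosen three-AP-free by Corollary~\ref{cor:finite-order}, so this is impossible.

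\emph{At least two blocks.} Let $k$ be the largest index of a block containing a term of the progression. The terms outside $B_k$ lie in blocks $B_i$ with $i<k$, hence in $S_{k-1}\subseteq[1,M_{k-1}]$. I apply Lemma~\ref{lem:positive-extension} with $M=M_{k-1}$, $A=S_{k-1}$, $L=L_k$ and $J=k$. Then the set $B$ of \eqref{eq:positive-extension-block} is exactly $B_k$, and the hypothesis $L>2M$ holds because $L_k=4M_{k-1}>2M_{k-1}$ by \eqref{eq:positive-upper-parameters}. The progression is contained in $A\cup B$ and meets both $A$ and $B$. This contradicts Lemma~\ref{lem:positive-extension}.

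Notice that the second case never uses the positional order. A monotone progression is just a progression whose terms occur in a prescribed order, so ruling out every progression that meets two blocks is enough; in particular, cross-block progressions are excluded even when the permutation order would be monotone. So the only real work is the parameter check $L_k>2M_{k-1}$ and the identification $S_{k-1}\subseteq[1,M_{k-1}]$, both of which were recorded just before the lemma. I do not expect any step to be a genuine obstacle; the lemma reduces directly to the two earlier results.
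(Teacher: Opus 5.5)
Your proposal is correct and follows the paper's proof essentially verbatim: a single-block progression is ruled out by $\sigma_k$, and a progression meeting several blocks is ruled out by Lemma~\ref{lem:positive-extension} applied with $A=S_{k-1}$, $M=M_{k-1}$, $L=L_k$, $J=k$. Your explicit check that $L_k=4M_{k-1}>2M_{k-1}$ and that the lemma's set $B$ coincides with $B_k$ spells out details the paper leaves implicit.
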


\begin{proof}
A progression whose three terms belong to one $B_k$ is excluded by
$\sigma_k$.  Suppose that a progression meets more than one block, and
let $B_k$ be the block with largest index among its terms.  Its other term
or terms belong to $S_{k-1}\subseteq[1,M_{k-1}]$.  Lemma
\ref{lem:positive-extension}, with $A=S_{k-1}$, $M=M_{k-1}$,
$L=L_k$, and $J=k$, says that no arithmetic progression meets both sets.
This is a contradiction.
\end{proof}

For $0\leq j<k$, we have
$2L_k4^j<L_k4^{j+1}+M_{k-1}$, so the intervals in
\eqref{eq:positive-upper-block} are pairwise disjoint.  The $j$th interval has
\[
                         L_k4^j-M_{k-1}+1
\]
terms.  Hence
\begin{equation}\label{eq:positive-upper-size}
 |B_k|=L_k\frac{4^{k+1}-1}{3}
          -(k+1)(M_{k-1}-1).
\end{equation}
Since $B_k\subseteq[1,M_k]$ for $M_k=2L_k4^k$,
\begin{align*}
 \frac{|S\cap[1,M_k]|}{M_k}
 &\geq\frac{|B_k|}{M_k}\\
 &=\frac23-\frac{1}{6\cdot4^k}
   -\frac{(k+1)(M_{k-1}-1)}{8M_{k-1}4^k}\\
 &\geq\frac23-\frac{1}{6\cdot4^k}
                 -\frac{k+1}{8\cdot4^k}.
\end{align*}
Since $4^{-k}\to0$, $(k+1)4^{-k}\to0$, and $M_k\to\infty$, the displayed
lower bounds tend to $2/3$.  The definition of upper
density therefore gives
\[
                         \overline d_{\N}(S)\geq\frac23.
\]
Lemma~\ref{lem:positive-upper-avoidance} proves
$\alpha_{\N}(3)\geq2/3$.

\section{Two-sided upper density}

The signed construction permits progressions meeting more than one block
whose midpoint belongs to an earlier block.  Such a progression is not
monotone because its midpoint occurs before both endpoints.

\begin{lemma}\label{lem:signed-extension}
Let $M$ and $L$ be positive integers with $L>2M$, let
$A\subseteq[-M,M]\cap\Z$, and let $J$ be a nonnegative integer.  Put
\begin{equation}\label{eq:signed-extension-block}
 I_j=[L4^j+M,2L4^j-M]\cap\Z
 \quad(0\leq j\leq J),
 \qquad
 B=\bigcup_{j=0}^J(I_j\cup-I_j).
\end{equation}
Suppose that every element of $A$ is ordered before every element of $B$.
Every three-term arithmetic progression contained in $A\cup B$ and meeting
both sets has its midpoint in $A$ and both endpoints in $B$.  Hence it is
not monotone.
\end{lemma}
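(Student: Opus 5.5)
The plan is a case analysis on how the three terms $x<y<z$, with $y-x=z-y$, of a progression meeting both $A$ and $B$ are distributed between the two sets. Every element of $B$ has absolute value at least $L+M>3M$, and every element of $A$ has absolute value at most $M$.

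\emph{Two terms in $A$, one in $B$.} If the two terms of $A$ are $u\neq v$, the third term is $2v-u$, $2u-v$, or $(u+v)/2$. Each of these has absolute value at most $3M<L+M$, so it cannot lie in $B$.

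\emph{One term $a\in A$ and two terms in $B$.} We must show $a$ is the midpoint, and for this we rule out $a$ being an endpoint. By the symmetry $B=-B$ (replace $A$ by $-A\subseteq[-M,M]$), it suffices to take the two $B$-terms to be $y$ (middle) and $z=2y-a$ (outer). Write $|y|\in I_j$. Then
\[
 2|y|-M\le|z|\le 2|y|+M,
\]
and $z$ has the same sign as $y$ because $|y|>M$. From $L4^j+M\le|y|\le2L4^j-M$ we get
\[
 2L4^j+M\le|z|\le4L4^j-M.
\]
This range lies strictly above $\max I_j=2L4^j-M$ and strictly below $\min I_{j+1}=4L4^j+M$. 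For $j=J$ it simply exceeds every element of $B$. Hence $|z|\notin\bigcup_j I_j$, so $z\notin B$, a contradiction. This is the only step needing care, and it is where the $-M$ at the right end of $I_j$ is used: it gives the sharper lower bound on $|z|$. The same estimate covers the case where $a$ is the smallest term and the case where $a$ is the largest term.

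\emph{Conclusion.} The remaining configuration has the midpoint in $A$ and both endpoints in $B$. The midpoint is ordered before both endpoints, so in the order the midpoint is either first, or it is not between the two endpoints. Thus the three terms do not appear in the order $x,y,z$ or $z,y,x$, and the progression is not monotone.
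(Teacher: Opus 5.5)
Your proof is correct and follows essentially the same route as the paper. Like the paper, you exclude two terms in $A$ via $3M<L+M$, exclude $a\in A$ as an endpoint via $z=2y-a$ and $2L4^j+M\le|z|\le4L4^j-M$, and conclude that the midpoint precedes both endpoints. One small correction to your side remark: the $-M$ at the right end of $I_j$ produces the upper bound $|z|\le4L4^j-M<\min I_{j+1}$, not the lower bound, which comes from the left endpoint $L4^j+M$; also, the symmetry $B=-B$ is unnecessary, since $z=2y-a$ holds whichever endpoint $a$ is.
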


\begin{proof}
The least absolute value of a member of $B$ is $L+M>3M$.  If two terms of
a progression, say $u,v$, lie in $A$, then a third term which is their
midpoint has absolute value at most $M$.  A third term which is an endpoint
has the form $2u-v$, and hence has absolute value at most
$2|u|+|v|\leq3M$.  Thus no progression has two terms in $A$ and one in $B$.

Suppose that a progression has one term $a\in A$ and two terms in $B$.
If $a$ is its midpoint, this is the conclusion of the lemma.  Suppose
instead that $a$ is an endpoint.  After exchanging the endpoints if
necessary, write the progression as
\[
                         a,\quad y,\quad z,
                         \qquad z=2y-a.
\]
If $y>0$, then $z\geq2|y|-|a|>0$; if $y<0$, then
$z\leq-2|y|+|a|<0$.  Thus $y,z$ have the same sign, and the triangle
inequalities give
\[
                         2|y|-M\leq|z|\leq2|y|+M.
\]
If $|y|\in I_j$, then
\[
                  2L4^j+M\leq|z|\leq4L4^j-M.
\]
The lower bound is larger than the greatest member $2L4^j-M$ of $I_j$.
If $j<J$, the upper bound is smaller than the least member $4L4^j+M$ of
$I_{j+1}$.  If $j=J$, then $|z|$ exceeds the greatest magnitude in $B$.
Thus $z\notin B$, a contradiction.
\end{proof}

Put $T_0=\varnothing$ and $R_0=1$.  For $k\geq1$, define
\begin{equation}\label{eq:signed-upper-parameters}
                         L_k=4R_{k-1}
\end{equation}
and
\begin{equation}\label{eq:signed-upper-intervals}
 I_{k,j}=[L_k4^j+R_{k-1},2L_k4^j-R_{k-1}]\cap\Z
 \quad(0\leq j\leq k).
\end{equation}
Put
\begin{equation}\label{eq:signed-upper-support}
\begin{aligned}
 B_k&=\bigcup_{j=0}^k(I_{k,j}\cup-I_{k,j}),\\
 R_k&=2L_k4^k-R_{k-1},\\
 T_k&=T_{k-1}\cup B_k,
 \qquad T=\bigcup_{k\geq1}B_k.
\end{aligned}
\end{equation}

Since $R_k=(8\cdot4^k-1)R_{k-1}>R_{k-1}$, induction gives
$T_k=\bigcup_{i=1}^kB_i\subseteq[-R_k,R_k]$.  Every member of $B_k$ has
absolute value at least $5R_{k-1}$, so the blocks are pairwise disjoint, and
$R_k$ is the largest absolute value introduced in $B_k$.  Each block is
finite and nonempty, since $I_{k,0}$ contains $L_k+R_{k-1}$.  Hence $T$ is
infinite.  Give the whole set $B_k$ a
finite three-AP-free order and concatenate these orders in increasing order
of $k$.

\begin{lemma}\label{lem:signed-upper-avoidance}
This concatenation is a three-AP-free $\omega$-permutation of $T$.
\end{lemma}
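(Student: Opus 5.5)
The plan mirrors the proof of Lemma~\ref{lem:positive-upper-avoidance}, with Lemma~\ref{lem:signed-extension} in place of Lemma~\ref{lem:positive-extension}. There are two things to show: that the concatenation is an $\omega$-permutation of $T$, and that it contains no monotone three-term progression.

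For the first, the blocks $B_k$ are finite, nonempty and pairwise disjoint, and their union is $T$. So the concatenation lists every element of $T$ exactly once. Each element of $B_k$ has only the finitely many elements of $B_1,\ldots,B_k$ as possible predecessors, so every element has finitely many predecessors. Lemma~\ref{lem:predecessor-count} then shows that the concatenation has order type $\omega$.

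For avoidance, take a three-term arithmetic progression contained in $T$. If all three terms lie in a single block $B_k$, the chosen three-AP-free order of $B_k$ makes it non-monotone. Otherwise, let $k$ be the largest index of a block containing one of its terms, so that $k\geq2$. The remaining terms lie in $T_{k-1}\subseteq[-R_{k-1},R_{k-1}]$. We apply Lemma~\ref{lem:signed-extension} with
\[
 A=T_{k-1},\qquad M=R_{k-1},\qquad L=L_k=4R_{k-1}>2M,\qquad J=k.
\]
With these choices the sets $I_j$ of \eqref{eq:signed-extension-block} are exactly the intervals $I_{k,j}$ of \eqref{eq:signed-upper-intervals}, so the lemma's $B$ is $B_k$. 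Its hypothesis that every element of $A$ is ordered before every element of $B$ holds, because the concatenation places $\sigma_1,\ldots,\sigma_{k-1}$ before the order of $B_k$. The progression lies in $A\cup B$ and meets both sets. The lemma therefore puts its midpoint in $A$ and both endpoints in $B_k$. The midpoint is listed before both endpoints, so the progression is not monotone in the concatenation.

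I expect no real obstacle, only bookkeeping. The two points to verify carefully are the containment $T_{k-1}\subseteq[-R_{k-1},R_{k-1}]$, which follows by induction from $R_k>R_{k-1}$ (already noted in the text), and the hypothesis $L>2M$, which holds since $L_k=4R_{k-1}$. One could worry that a term of the progression in an earlier block might interact with a later block. That cannot happen here, because $k$ was chosen maximal, so all other terms lie in $T_{k-1}$.
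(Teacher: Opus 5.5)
Your proposal is correct and follows the paper's proof essentially step for step. You get the $\omega$-permutation property from Lemma~\ref{lem:predecessor-count}, then take the largest block index $k$ and apply Lemma~\ref{lem:signed-extension} with $A=T_{k-1}$ and $B=B_k$, so the midpoint lies in an earlier block and precedes both endpoints; your explicit check of $M=R_{k-1}$, $L=4R_{k-1}>2M$, $J=k$ and $I_j=I_{k,j}$ spells out what the paper leaves implicit.
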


\begin{proof}
The concatenation lists every member of $T$ exactly once, and every member
has finitely many predecessors, so Lemma~\ref{lem:predecessor-count} makes
it an $\omega$-permutation.  A
progression contained in one $B_k$ is excluded by its internal order.  For a
progression meeting more than one block, let $k$ be the largest index for
which it has a term in $B_k$.  Lemma
\ref{lem:signed-extension} applies to $A=T_{k-1}$ and $B=B_k$.  If the
progression has one term in $B_k$, the lemma says that it cannot exist.  If
it has two terms in $B_k$, its remaining term is its midpoint in
$T_{k-1}$ and occurs before both endpoints.  It is therefore not monotone.
\end{proof}

For $0\leq j<k$,
$2L_k4^j-R_{k-1}<L_k4^{j+1}+R_{k-1}$, so the positive intervals are
pairwise disjoint.  They contain only positive integers, while their
negatives contain only negative integers.  Thus all sets in the union are
disjoint.  Each positive interval in \eqref{eq:signed-upper-intervals} has
\[
                         L_k4^j-2R_{k-1}+1
\]
terms.  Hence
\begin{equation}\label{eq:signed-upper-size}
 |B_k|=2L_k\frac{4^{k+1}-1}{3}
       -2(k+1)(2R_{k-1}-1).
\end{equation}
Put $U_k=L_k4^k$, so that $R_k=2U_k-R_{k-1}$.  Since
$B_k\subseteq[-R_k,R_k]$, keeping only its contribution gives
\begin{equation}\label{eq:signed-upper-density}
 \frac{|T\cap[-R_k,R_k]|}{2R_k+1}
 \geq
 \frac{\frac83U_k-\frac23L_k-4(k+1)R_{k-1}+2(k+1)}
      {4U_k-2R_{k-1}+1}.
\end{equation}
Since $L_k=4R_{k-1}$,
\[
 \frac{L_k}{U_k}=4^{-k},
 \qquad
\frac{R_{k-1}}{U_k}=4^{-(k+1)}.
\]
Also $U_k=L_k4^k\geq4^{k+1}$, so $(k+1)/U_k\to0$.  Dividing the numerator
and denominator in \eqref{eq:signed-upper-density} by $U_k$, the numerator
tends to $8/3$ and the denominator to $4$.  Since $R_k\to\infty$, the
definition of upper density gives
\[
                         \overline d_{\Z}(T)\geq\frac23.
\]
Lemma~\ref{lem:signed-upper-avoidance} proves
$\alpha_{\Z}(3)\geq2/3$.

\section{Four-term progressions}

The construction combines the binary order of Section~2 with Adenwalla's
geometric residue blocks, truncation, and alternating signed intervals
\cite[Theorems~1, 4, 6, and 7]{Adenwalla}.

Fix a power of two $m\geq2$, an integer $a\geq4$, and a real
number $c$ satisfying
\begin{equation}\label{eq:four-parameters}
                         2+\frac1a<c<a.
\end{equation}
Write $\rho=\rho_m$.  For
$0\leq i<m$, let
$r_i\in\{0,1,\ldots,m-1\}$ be the residue with $\rho(r_i)=i$.  For
$s\geq0$, define
\begin{equation}\label{eq:four-blocks}
\begin{aligned}
 N_{s,i}={}&\left\{x<0:x\equiv r_i\pmod m,
       \ a^{2ms+m-i}\leq |x|<\frac{a^{2ms+3m-i}}c\right\},\\
 P_{s,i}={}&\left\{x>0:x\equiv r_i\pmod m,
       \ a^{2ms+m+1+i}\leq x<\frac{a^{2ms+3m+1+i}}c\right\}.
\end{aligned}
\end{equation}
Order the blocks at stage $s$ as
\begin{equation}\label{eq:four-block-order}
 N_{s,0},N_{s,1},\ldots,N_{s,m-1},
 P_{s,m-1},P_{s,m-2},\ldots,P_{s,0},
\end{equation}
and concatenate the stages for $s=0,1,2,\ldots$.  For $x,y$ in a block
with residue $r$, put $x$ before $y$ if
\[
 \frac{x-r}{m}\prec\frac{y-r}{m}
 \quad\text{in a negative block},\qquad
 \frac{y-r}{m}\prec\frac{x-r}{m}
 \quad\text{in a positive block}.
\]

We call the exponent of a block's lower magnitude
endpoint its \emph{lower exponent}. For a fixed residue and sign, consecutive stages have lower exponents
$f$ and $f+2m$.  Since $c>1$, the first block ends before
$a^{f+2m}$, where the next begins, so these blocks are disjoint.  Blocks
with different residues or signs contain different integers.  Every block is
finite.  If $f$ is its lower exponent, then
$a^{f+2m}/c>a^{f+2m-1}$, so its defining magnitude interval contains
$[a^f,a^{f+2m-1})$.  Its length satisfies
\[
 a^f(a^{2m-1}-1)\geq4(4^m-1)\geq4m>m.
\]
For $P_{s,i}$, the first positive integer at least $a^f$ that is congruent
to $r_i$ modulo $m$ is smaller than $a^f+m$ and hence lies in this
interval.  For $N_{s,i}$, take instead the first positive magnitude in the
residue class $-r_i$ modulo $m$ and negate it; the resulting negative
integer is congruent to $r_i$ modulo $m$.  Thus every block is nonempty.
Let
$S_{m,a,c}$ be the union of the blocks.
There are infinitely many pairwise disjoint nonempty blocks, so
$S_{m,a,c}$ is infinite.
Every member lies in exactly one finite block, and each block has only
finitely many predecessor stages and blocks.  Thus every member has finitely
many predecessors.  Lemma~\ref{lem:predecessor-count} identifies this
infinite order with $\N\cup\{0\}$, so it is an $\omega$-permutation
$\pi_{m,a,c}$ of $S_{m,a,c}$.

Assign each block a rank $q(B)\in\{0,\ldots,m-1\}$ by
\begin{equation}\label{eq:four-block-ranks}
                 q(N_{s,i})=i,\qquad q(P_{s,i})=m-1-i.
\end{equation}
Every $x\in S_{m,a,c}$ lies in a unique block $B(x)$; put
$q(x)=q(B(x))$.  In either sign, the lower exponent in
\eqref{eq:four-blocks} has the form
\begin{equation}\label{eq:four-exponent}
                         2ms+K-q,
\end{equation}
where $K=m$ for negative blocks and $K=2m$ for positive blocks.

If an arithmetic progression has common difference $d$ with $m\nmid d$,
every adjacent pair has distinct residues modulo $m$, hence distinct
$\rho$-ranks.  For a same-sign pair, the block ranks are also distinct,
because they use either the $\rho$-order or its reverse.

For a finite arithmetic progression $x_0,\ldots,x_v$, call the block
sequence $B(x_0),\ldots,B(x_v)$ chronologically nondecreasing if its blocks
occur in nondecreasing order in the block sequence defining
$\pi_{m,a,c}$.

If a block has lower exponent $f$, its \emph{outer exponent} is $f+2m$
and its strict outer magnitude endpoint is
\begin{equation}\label{eq:four-outer-endpoint}
                         U_f=\frac{a^{f+2m}}c.
\end{equation}

\begin{lemma}\label{lem:four-scale}
Let two distinct blocks of one sign occur in chronological order, with
ranks $q$ and $q'$, respectively, where $q'\leq q$.  If the first block
has outer endpoint $U_f$, then every magnitude in the later block is at
least
\[
                         ca^{q-q'}U_f.
\]
In particular, it is more than $c$ times every magnitude in the first
block, and more than $ac$ times every magnitude in the first block when
$q'<q$.
\end{lemma}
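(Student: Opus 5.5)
The plan is to reduce everything to the lower exponent formula \eqref{eq:four-exponent} and compare exponents directly. Suppose the first block has stage $s$, rank $q$, and lower exponent $f=2ms+K-q$. Suppose the later block has stage $s'$, rank $q'$, and lower exponent $f'=2ms'+K-q'$. Both blocks have the same sign, so $K$ is the same for both. Every magnitude in the later block is at least $a^{f'}$, and $U_f=a^{f+2m}/c$. The first claim therefore amounts to
\[
 a^{f'}\geq ca^{q-q'}\cdot\frac{a^{f+2m}}{c}=a^{f+2m+q-q'},
\]
that is, $f'\geq f+2m+q-q'$. Substituting the exponent formula, this reads $2ms'-q'\geq 2ms-q+2m+q-q'$, i.e.\ $s'\geq s+1$.

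The key step is to show that the later block lies in a strictly later stage. Within one stage and one sign, the chronological order \eqref{eq:four-block-order} lists negative blocks as $N_{s,0},\dots,N_{s,m-1}$, whose ranks $0,\dots,m-1$ increase. It lists positive blocks as $P_{s,m-1},\dots,P_{s,0}$, whose ranks $q(P_{s,i})=m-1-i$ run $0,1,\dots,m-1$ and also increase. So two distinct same-sign blocks in one stage, taken in chronological order, have strictly increasing ranks. This contradicts $q'\le q$. Since stages are concatenated in increasing $s$, we get $s'>s$, hence $s'\geq s+1$, which gives the displayed inequality.

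For the ``in particular'' part, note that every magnitude $x$ in the first block satisfies $x<U_f$. Then every magnitude $y$ in the later block satisfies $y\geq ca^{q-q'}U_f>ca^{q-q'}x$. When $q'\le q$ this is at least $c x$, and when $q'<q$ it is at least $ac\,x$, using $a\geq4>1$. I expect no real obstacle. The only delicate point is checking that the positive blocks are listed in increasing rank, which holds because of the reversal $q(P_{s,i})=m-1-i$. Strictness comes from the strict upper endpoint of the blocks.
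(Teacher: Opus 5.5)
Your proof is correct and takes the same approach as the paper. Within each sign, ranks increase chronologically inside a stage (thanks to the reversal $q(P_{s,i})=m-1-i$), so $q'\le q$ forces a later stage, and the exponent formula then gives $f'\geq f+2m+q-q'$. Your explicit reduction to $s'\geq s+1$ is just a more detailed version of the paper's ``earliest possible stage'' remark.
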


\begin{proof}
Among the negative blocks or among the positive blocks of a fixed stage,
the block ranks increase chronologically.  Thus the later block belongs
to a later stage.  By
\eqref{eq:four-exponent}, its lower exponent in the earliest possible
stage is $f+2m+q-q'$.  Its magnitudes are therefore at least
$a^{f+2m+q-q'}=ca^{q-q'}U_f$.  A still later stage only increases this
bound, and every magnitude in the first block is strictly less than $U_f$.
\end{proof}

Call a same-sign segment of an arithmetic progression \emph{shrinking} if
its magnitudes decrease and \emph{growing} if its magnitudes increase.

\begin{lemma}\label{lem:four-shrinking}
A chronologically nondecreasing same-sign shrinking three-term arithmetic
progression is contained in one block.
\end{lemma}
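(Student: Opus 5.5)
The plan is to combine the residue zigzag of Lemma~\ref{lem:binary-residues} with the growth estimate of Lemma~\ref{lem:four-scale}. Write the progression as $x_0,x_1,x_2$ with common difference $d$. All three terms have one sign, their magnitudes satisfy $|x_0|>|x_1|>|x_2|>0$, and $B(x_0),B(x_1),B(x_2)$ occur in nondecreasing chronological order. Since the magnitudes themselves form an arithmetic progression, $|x_0|=2|x_1|-|x_2|<2|x_1|$. The whole argument rests on one observation: whenever Lemma~\ref{lem:four-scale} applies to a pair of consecutive terms lying in distinct blocks, the later term has magnitude more than $c>2$ times the earlier one. This is incompatible with shrinking.

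\emph{Case $m\mid d$.} All three terms have the same residue modulo $m$, and since they also have the same sign, $q(x_0)=q(x_1)=q(x_2)$. Suppose $B(x_0)\neq B(x_1)$. Apply Lemma~\ref{lem:four-scale} with $q'=q$: it gives $|x_1|>c|x_0|>|x_0|$, contradicting shrinking. In the same way, $B(x_1)\neq B(x_2)$ would give $|x_2|>c|x_1|$, again a contradiction. Hence all three terms lie in one block.

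\emph{Case $m\nmid d$.} Adjacent terms have distinct residues, so their $\rho$-ranks differ. By the remark before the lemma, the block ranks $q(x_0),q(x_1),q(x_2)$ of these same-sign terms are the $\rho$-ranks of the residues or their reversal $m-1-\rho$. Lemma~\ref{lem:binary-residues} says the middle $\rho$-rank is strictly below both endpoint ranks or strictly above both, and reversal preserves this property. So either $q(x_1)<q(x_0)$ or $q(x_1)>q(x_2)$. Adjacent terms in different residue classes lie in different blocks, so both pairs consist of distinct blocks occurring in chronological order.
\begin{itemize}
\item If $q(x_1)<q(x_0)$, Lemma~\ref{lem:four-scale} applied to $B(x_0),B(x_1)$ gives $|x_1|>ac|x_0|$.
\item If $q(x_1)>q(x_2)$, the same lemma applied to $B(x_1),B(x_2)$ gives $|x_2|>ac|x_1|$.
\end{itemize}
Both conclusions contradict shrinking. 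So this case cannot occur, and the progression lies in one block.

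The main point needing care is the second case. Lemma~\ref{lem:four-scale} only handles a later block of rank no larger than the earlier one, and a later block of larger rank within the same stage can have smaller magnitudes. It is exactly the zigzag property that guarantees one of the two adjacent pairs has the favorable rank ordering. I would also check explicitly that reversing the rank order for positive blocks does not break the zigzag property.
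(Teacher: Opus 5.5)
Your proof is correct and follows essentially the same route as the paper. In the case $m\mid d$, equal block ranks let Lemma~\ref{lem:four-scale} rule out any move to a later block. In the case $m\nmid d$, the zigzag property \eqref{eq:binary-zigzag}, which survives reversal of the rank order, forces one transition into a later block of strictly smaller rank, where Lemma~\ref{lem:four-scale} contradicts shrinking; you simply spell out which transition that is.
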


\begin{proof}
Let $d$ be its common difference.  If $m\mid d$, all three terms have the
same block rank.  Lemma~\ref{lem:four-scale} prevents a transition to a
later block because the magnitudes decrease.  If $m\nmid d$, the two
successive block-rank comparisons are opposite by
\eqref{eq:binary-zigzag}, whether the ranks use $\rho$ or its reverse.
Adjacent terms have different residues and lie in distinct blocks.  At
one transition the later block has smaller rank, so
Lemma~\ref{lem:four-scale} makes its magnitudes larger, contradicting
shrinking.
\end{proof}

\begin{lemma}\label{lem:four-crossing}
Let $x_0,x_1,x_2$ be consecutive terms of an arithmetic progression with
signs $-,+,+$ or $+,-,-$.  Suppose that the last pair grows in magnitude,
the three blocks are chronologically nondecreasing, and
$q(x_2)\leq q(x_1)$.  Then $x_1$ and $x_2$ lie in the same block.
\end{lemma}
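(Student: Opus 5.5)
The plan is to argue by contradiction and compare two estimates for $|x_2|$. Lemma~\ref{lem:four-scale} says $|x_2|$ is large if $x_2$ moves to a later block. The progression identity, on the other hand, keeps $|x_2|$ small, because $x_0$ has the opposite sign and lies in an earlier block.

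Suppose $B(x_2)\neq B(x_1)$. Since $x_1,x_2$ have the same sign and the blocks are chronologically nondecreasing, $B(x_2)$ occurs strictly after $B(x_1)$. Let $U_f$ be the outer endpoint of $B(x_1)$. Because $q(x_2)\leq q(x_1)$, Lemma~\ref{lem:four-scale} gives $|x_2|\geq ca^{q(x_1)-q(x_2)}U_f\geq cU_f$.

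For the upper estimate, write $x_2=2x_1-x_0$. Since $x_0$ has sign opposite to $x_1$, this gives $|x_2|=2|x_1|+|x_0|<2U_f+|x_0|$. It therefore suffices to show $|x_0|<U_f/a$, since then $|x_2|<(2+1/a)U_f<cU_f$ by \eqref{eq:four-parameters}, a contradiction. I would check this bound separately for the two sign patterns, using the stage ordering \eqref{eq:four-block-order} and the exponents in \eqref{eq:four-blocks}.

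\emph{Signs $-,+,+$.} Here $x_1\in P_{s,i}$, so $U_f=a^{2ms+3m+1+i}/c$. The block of $x_0$ is a negative block $N_{s',i'}$ occurring no later, so $s'\leq s$. Hence $|x_0|<a^{2ms'+3m-i'}/c\leq a^{2ms+3m}/c\leq U_f/a$.

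\emph{Signs $+,-,-$.} Here $x_1\in N_{s,i}$, so $U_f=a^{2ms+3m-i}/c\geq a^{2ms+2m+1}/c$. Every positive block of stage $s$ comes after all negative blocks of stage $s$. Therefore $x_0\in P_{s',i'}$ with $s'\leq s-1$. This gives $|x_0|<a^{2ms'+3m+1+i'}/c\leq a^{2m(s-1)+4m}/c=a^{2ms+2m}/c\leq U_f/a$.

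The main care point is this exponent bookkeeping, in particular the strict inequality $s'<s$ in the second case, which comes from the block order within a stage. Once $|x_0|<U_f/a$ is established, the hypothesis $c>2+1/a$ closes the argument exactly.
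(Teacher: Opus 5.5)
Your proposal is correct and follows essentially the same argument as the paper. Both proofs use the stage and exponent bookkeeping for the two sign patterns to get $|x_0|<U_f/a$, then deduce $|x_2|=2|x_1|+|x_0|<(2+1/a)U_f<cU_f$, which contradicts the lower bound $|x_2|\geq cU_f$ from Lemma~\ref{lem:four-scale}.
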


\begin{proof}
Let the block containing $x_1$ belong to stage $s$, have lower exponent
$f$, and have outer endpoint $U_f$.  Every chronologically earlier block
of the opposite sign has outer endpoint at most $U_f/a$.  If $x_1$ is
positive, the largest outer exponent of an earlier negative block in the
same stage is $2ms+3m$, while the outer exponent of the positive block is
at least $2ms+3m+1$.  If $x_1$ is negative, every earlier positive block
belongs to a preceding stage; its outer exponent is at most $2ms+2m$,
while that of the negative block is at least $2ms+2m+1$.

The block containing $x_0$ is one of these earlier opposite-sign blocks,
so $|x_0|<U_f/a$, while $|x_1|<U_f$.  It follows that
\[
                  |x_2|=2|x_1|+|x_0|
                    <\left(2+\frac1a\right)U_f<cU_f.
\]
If $x_2$ belonged to a distinct later block with nonincreasing block rank,
Lemma~\ref{lem:four-scale} would give $|x_2|\geq cU_f$, a contradiction.
\end{proof}

If $z_{j-1},z_j,z_{j+1}$ are consecutive same-sign terms of an arithmetic
progression with common difference $d$ and their magnitudes grow, then
\begin{equation}\label{eq:four-growing-ratio}
 |z_{j+1}|=|z_j|+|d|<2|z_j|,
 \qquad\text{since}\qquad |z_j|=|z_{j-1}|+|d|>|d|.
\end{equation}

\begin{lemma}\label{lem:four-growing}
The following statements hold.
\begin{enumerate}
\item Let $y,x_0,x_1,x_2$ be consecutive terms of an arithmetic
progression whose blocks are chronologically nondecreasing.  If
$x_0,x_1,x_2$ have one sign, grow in magnitude, and have sign opposite to
$y$, then $x_0,x_1,x_2$ lie in one block.
\item If $x_0,x_1,x_2,x_3$ is a chronologically nondecreasing growing
four-term arithmetic progression of one sign, then $x_1,x_2,x_3$ lie in
one block.
\end{enumerate}
\end{lemma}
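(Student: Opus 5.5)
We argue directly from Lemma~\ref{lem:four-scale}, Lemma~\ref{lem:four-crossing}, the zigzag property \eqref{eq:binary-zigzag}, and the growth bound \eqref{eq:four-growing-ratio}. The basic tool is the following. Let $u,w$ be consecutive same-sign terms of a growing segment, with a same-sign predecessor, so that $|w|<2|u|$. Suppose they lie in distinct blocks in chronological order with $q(w)\leq q(u)$. Then Lemma~\ref{lem:four-scale} gives $|w|>c|u|>2|u|$, a contradiction. So such a transition forces $u,w$ into the same block. When $q(w)<q(u)$ the transition is impossible outright, since $u,w$ would then lie in different blocks.

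Throughout, let $d$ be the common difference and split into two cases, $m\mid d$ and $m\nmid d$. If $m\mid d$, all terms share a residue and hence a block rank $q$. If $m\nmid d$, adjacent terms have distinct residues. Since the block ranks use $\rho$ or its reverse, adjacent same-sign terms then lie in distinct blocks. By \eqref{eq:binary-zigzag}, their ranks zigzag, so of any two consecutive transitions along three same-sign terms, one strictly decreases the rank.

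For part (2), the terms $x_1,x_2,x_3$ each have a same-sign predecessor in the growing progression, so \eqref{eq:four-growing-ratio} gives $|x_2|<2|x_1|$ and $|x_3|<2|x_2|$. If $m\mid d$, both transitions $x_1\to x_2$ and $x_2\to x_3$ have equal ranks, so the tool places all three in one block. If $m\nmid d$, one of these two transitions strictly decreases the rank between distinct blocks, which the tool forbids. Hence this case cannot occur, and the conclusion holds vacuously. The term $x_0$ is used only to guarantee $|x_1|>|d|$.

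For part (1), the segment $x_1\to x_2$ is handled by the tool as before, since $|x_2|<2|x_1|$. The delicate transition is $x_0\to x_1$. Here $|x_0|$ may be tiny relative to $|d|$ because $y$ has the opposite sign, so no ratio bound is available. This is the main obstacle, and Lemma~\ref{lem:four-crossing} is designed for it. Apply that lemma to the triple $y,x_0,x_1$: its signs are $-,+,+$ or $+,-,-$, the last pair grows, and the blocks are chronologically nondecreasing. So whenever $q(x_1)\leq q(x_0)$, the terms $x_0,x_1$ share a block.
\begin{itemize}
\item If $m\mid d$, then $q(x_1)=q(x_0)$, so $x_0,x_1$ share a block. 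The equal-rank transition $x_1\to x_2$ is then handled by the tool, and $x_0,x_1,x_2$ lie in one block.
\item If $m\nmid d$ and $q(x_1)<q(x_0)$, then Lemma~\ref{lem:four-crossing} puts $x_0,x_1$ in one block. This contradicts their distinct residues.
\item If $m\nmid d$ and $q(x_1)>q(x_0)$, then by zigzag $q(x_2)<q(x_1)$ with $x_1,x_2$ in distinct blocks. This is forbidden by the tool.
\end{itemize}
Thus only the case $m\mid d$ survives, and in it $x_0,x_1,x_2$ lie in one block, which proves part (1).
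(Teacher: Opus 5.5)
Your proposal is correct and follows essentially the same approach as the paper. Both split on $m\mid d$ versus $m\nmid d$ and use Lemma~\ref{lem:four-crossing} for the $x_0\to x_1$ transition. For transitions with a same-sign predecessor, both combine Lemma~\ref{lem:four-scale} with the growth bound \eqref{eq:four-growing-ratio} and the zigzag property; your packaging of that last step as a reusable ``tool'' using only $c>2$, where the paper sometimes cites $ac>2$, is a cosmetic difference.
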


\begin{proof}
Let $d$ be the common difference.  If $m\mid d$, the three same-sign terms
in the first statement have one block rank.  Applied to $y,x_0,x_1$,
Lemma~\ref{lem:four-crossing} puts $x_0,x_1$ in one block.  Equation
\eqref{eq:four-growing-ratio} gives $|x_2|<2|x_1|$.  Since $c>2$,
Lemma~\ref{lem:four-scale} prevents $x_2$ from lying in a distinct later
block of the same rank.  In the second statement, all four terms have one
block rank.  Applying \eqref{eq:four-growing-ratio} to the triples
$x_0,x_1,x_2$ and $x_1,x_2,x_3$, and then applying
Lemma~\ref{lem:four-scale}, puts the last three terms in one block.

Suppose that $m\nmid d$.  For the first statement,
\eqref{eq:binary-zigzag} makes the later block have strictly smaller rank on
one of the two transitions, because adjacent terms have different residues.
If this occurs from $x_0$ to $x_1$, Lemma
\ref{lem:four-crossing} puts two terms with different residues in one
block, a contradiction.  If it occurs from $x_1$ to $x_2$, equation
\eqref{eq:four-growing-ratio} gives $|x_2|<2|x_1|$, whereas
Lemma~\ref{lem:four-scale} gives $|x_2|>ac|x_1|>2|x_1|$, a contradiction.
For the second statement, the block-rank comparisons from
$x_1$ to $x_2$ and from $x_2$ to $x_3$ are opposite by
\eqref{eq:binary-zigzag}.  At the strict descent, the two terms have
different residues, so Lemma~\ref{lem:four-scale} makes the later magnitude
more than $ac>2$ times the earlier one, contrary to
\eqref{eq:four-growing-ratio}.
\end{proof}

\begin{lemma}\label{lem:four-block-pattern}
Let $x_0,x_1,x_2,x_3$ be a nonconstant four-term arithmetic progression
in $S_{m,a,c}$ whose blocks are chronologically nondecreasing.  Then either
three consecutive terms lie in one block, or $x_0,x_1$ lie in one block
$A$ and $x_2,x_3$ lie in one block $B$, where $A$ and $B$ have the same
residue modulo $m$ and opposite signs.
\end{lemma}

\begin{proof}
No term is zero.  Since the progression is nonconstant, its terms are
strictly monotone as integers.  If all four have one sign, their magnitudes
either shrink or grow.  Lemma~\ref{lem:four-shrinking} or the second part
of Lemma~\ref{lem:four-growing} puts three consecutive terms in one block.
Otherwise the sign changes once, and the two maximal consecutive
same-sign segments have lengths $1$ and $3$, $2$ and $2$, or $3$ and $1$.
Indeed, a strictly monotone integer sequence not containing zero changes
sign at most once; before the change its magnitudes shrink, and after the
change they grow.  In the $3+1$ case, Lemma~\ref{lem:four-shrinking} puts
the initial three terms in one block.  In the $1+3$ case, the first part of
Lemma~\ref{lem:four-growing}, with the initial term as $y$, puts the terminal
three terms in one block.  Only the $2+2$ case remains.

Let $d$ be the common difference.  If $m\mid d$, the shrinking pair has
one block rank, so Lemma~\ref{lem:four-scale} puts it in one block.  Lemma
\ref{lem:four-crossing} puts the growing pair in one block.  These blocks
have the same residue and opposite signs.

Suppose that $m\nmid d$.  The shrinking pair must have increasing block
ranks, since Lemma~\ref{lem:four-scale} precludes a nonincreasing block
rank across a shrinking transition.  The ranks are unequal because
adjacent residues differ.  If the shrinking pair is negative,
then $\rho(x_0)<\rho(x_1)$.  Equation \eqref{eq:binary-pairs} gives
$\rho(x_2)<\rho(x_3)$, strictly, so $q(x_2)>q(x_3)$ because positive blocks use the
reverse residue order.  If the shrinking pair is positive, then
$\rho(x_0)>\rho(x_1)$, so \eqref{eq:binary-pairs} gives
$\rho(x_2)>\rho(x_3)$, strictly.  Hence $q(x_2)>q(x_3)$ for the negative growing
pair.  In either case, Lemma~\ref{lem:four-crossing} would put the growing
pair in one block although its residues are different.  Thus $m\nmid d$
cannot occur when two terms have each sign.
\end{proof}

\begin{lemma}\label{lem:four-avoidance}
The $\omega$-permutation $\pi_{m,a,c}$ contains no monotone four-term
arithmetic progression.
\end{lemma}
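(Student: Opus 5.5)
Suppose, for contradiction, that $\pi_{m,a,c}$ lists a nonconstant four-term arithmetic progression monotonically. Reversing the labels if necessary, write it as $x_0,x_1,x_2,x_3$ in the order in which its terms appear in $\pi_{m,a,c}$, with $x_j=x_0+jd$ and $d\neq0$. Since blocks are listed consecutively, the block sequence $B(x_0),\ldots,B(x_3)$ is chronologically nondecreasing. Lemma~\ref{lem:four-block-pattern} then leaves two cases, and we will rule out each using the internal order of a block together with Lemma~\ref{lem:integer-binary-order}.

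\emph{Case 1: three consecutive terms lie in one block.} Call them $x_j,x_{j+1},x_{j+2}$, and let $r$ be the residue of the block. Then $m\mid d$, so the integers $u_k=(x_{j+k}-r)/m$ form a progression with nonzero difference $h=d/m$. Within the block, the order is $\prec$ on these quotients, or its reverse for a positive block. Monotone appearance of $x_j,x_{j+1},x_{j+2}$ therefore gives either $u_0\prec u_1\prec u_2$ or $u_2\prec u_1\prec u_0$. Both chains are forbidden by Lemma~\ref{lem:integer-binary-order}.

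\emph{Case 2: the pattern $x_0,x_1\in A$ and $x_2,x_3\in B$.} Here $A$ and $B$ have the same residue $r$ and opposite signs, so again $m\mid d$. Put $u=(x_0-r)/m$ and $h=d/m$, so that the quotients of $x_0,x_1,x_2,x_3$ are $u,u+h,u+2h,u+3h$. Exactly one of $A,B$ is a negative block, which uses $\prec$, and the other is positive, which uses the reverse order. Since $x_0$ precedes $x_1$ in $A$ and $x_2$ precedes $x_3$ in $B$, one of the two pairs gives $u\prec u+h$ or $u+h\prec u$, and the other pair gives the reverse comparison for $u+2h,u+3h$. Concretely, if $A$ is negative we get $u\prec u+h$ and $u+3h\prec u+2h$; if $A$ is positive we get $u+h\prec u$ and $u+2h\prec u+3h$. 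Each contradicts the equivalence $u\prec u+h\Leftrightarrow u+2h\prec u+3h$ in Lemma~\ref{lem:integer-binary-order}.

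The only point needing care is the orientation bookkeeping in Case 2. One must check that ordering $x_0,\ldots,x_3$ by position in $\pi_{m,a,c}$ yields an increasing-in-index progression with a fixed difference $d$, possibly negative. The sign of $h$ is irrelevant, because Lemma~\ref{lem:integer-binary-order} holds for every nonzero $h$. One must also check that the opposite signs of $A$ and $B$ really produce opposite internal orientations, since this is exactly what makes the equivalence fail. The rest is immediate from Lemma~\ref{lem:four-block-pattern}.
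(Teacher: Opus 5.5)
Your proof is correct and matches the paper's argument: apply Lemma~\ref{lem:four-block-pattern} to the chronologically nondecreasing block sequence, pull the three-in-one-block case back via $x\mapsto(x-r)/m$ to a monotone three-term progression under $\prec$ or its reverse, and in the two-block case set the equivalence $u\prec u+h\Leftrightarrow u+2h\prec u+3h$ against the opposite internal orientations of negative and positive blocks. The only difference is cosmetic: the ``reversing the labels'' step is unnecessary, since the definition of a monotone progression already lists the terms as $x+jd$ in order of appearance with $d$ of either sign.
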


\begin{proof}
Suppose that $x_0,x_1,x_2,x_3$ is such a progression in its order of
appearance.  Its blocks are chronologically nondecreasing.  By
Lemma~\ref{lem:four-block-pattern}, three consecutive terms lie in one
block, or the progression has the two-block form stated there.

In the first case, the three terms have a common residue $r$ modulo $m$.
The map $x\mapsto(x-r)/m$ gives a nonconstant three-term arithmetic
progression that occurs monotonically in $\prec$ or its reverse, contrary
to Lemma~\ref{lem:integer-binary-order}.

In the second case, let $r$ be the common residue and put
$z_i=(x_i-r)/m$.  If $h=z_1-z_0$, then $h\neq0$ and
\[
                     (z_2,z_3)=(z_0+2h,z_1+2h).
\]
Indeed, $x_{i+1}-x_i$ is independent of $i$ and equals $mh$, so
$z_{i+1}-z_i=h$ for $0\leq i<3$.
Lemma~\ref{lem:integer-binary-order} gives
\begin{equation}\label{eq:four-two-block-comparison}
                     z_0\prec z_1
                     \quad\Longleftrightarrow\quad z_2\prec z_3.
\end{equation}
If $A$ is negative and $B$ is positive, their internal orders require
$z_0\prec z_1$ and $z_3\prec z_2$, contrary to
\eqref{eq:four-two-block-comparison}.  If $A$ is positive and $B$ is
negative, they require $z_1\prec z_0$ and $z_2\prec z_3$, again a
contradiction.
\end{proof}

Together with the construction above,
  Lemma~\ref{lem:four-avoidance} establishes the four-permutability of
  $S_{m,a,c}$.  To complete the proof of
  Theorem~\ref{thm:four-family}, it remains to compute its lower symmetric
  density.  We do this by describing the integers omitted outside a fixed
  central interval, namely the magnitude gaps between consecutive blocks
  of each fixed sign and residue, and computing the upper symmetric density
  of the complement.

A block with lower exponent $f$ occupies the magnitude interval
\[
                       [a^f,a^{f+2m}/c).
\]
The block with the same residue and sign in the next stage begins at
$a^{f+2m}$.  Its intervening gap is
\begin{equation}\label{eq:four-gap}
                         [a^e/c,a^e),
 \qquad e=f+2m.
\end{equation}
The lower exponents of the blocks at stage $s$ are
\begin{equation}\label{eq:four-stage-exponents}
                 \{2ms+1,2ms+2,\ldots,2ms+2m\}.
\end{equation}
Each exponent in \eqref{eq:four-stage-exponents} occurs once.  The gap
exponents arising from stage $s$ are therefore
\[
                 \{2ms+2m+1,\ldots,2ms+4m\}.
\]
The last of these is one less than the first gap exponent for stage
$s+1$.  Thus every integer $e\geq e_0:=2m+1$ is the right endpoint
exponent of exactly one gap.

Since $c<a$, gaps with consecutive right endpoint exponents are disjoint:
the gap ending at $a^e$ lies below $a^e<a^{e+1}/c$, the lower endpoint of
the next gap.  We next verify that there are no other omissions.  Fix a sign
and a residue class.  The lower exponents of its blocks have the form
$f_0+2ms$, where $1\leq f_0\leq2m$ and $s\geq0$.  Let $x$ have the fixed
sign and residue and satisfy $|x|>a^{2m}$.  Choose the unique integer
$E\geq2m$ such that
\[
                         a^E\leq|x|<a^{E+1}.
\]
Since $f_0\leq2m\leq E$, let $f$ be the largest number of the form
$f_0+2ms$ with $f\leq E$.
Then $f\leq E<f+2m$.  If $|x|<a^{f+2m}/c$, then $x$ belongs to the block
with lower exponent $f$.  Otherwise,
\[
             \frac{a^{f+2m}}c\leq|x|<a^{E+1}\leq a^{f+2m},
\]
so $x$ belongs to that block's following gap.  Consequently, outside
$[-a^{2m},a^{2m}]$, each missing integer lies in exactly one gap, and each
gap removes exactly the integers of one sign in one residue class.

For an integer $n\geq0$, let
\[
 H(n)=|[-n,n]\cap(\Z\setminus S_{m,a,c})|,
 \qquad
 \lambda=\frac{1-1/c}{m},
 \qquad
 L=\frac{\lambda a}{2(a-1)}.
\]
For a gap with right endpoint $a^e$, put
\[
                    \ell_e=\left\lceil\frac{a^e}{c}\right\rceil.
\]
After replacing missing integers by their positive magnitudes, the gap
consists exactly of the integers in $[\ell_e,a^e)$ in one residue class
modulo $m$.  Let $h_e$ be their number.  For integer endpoints $u\leq v$,
the number of integers in one residue class modulo $m$ that belong to
$[u,v)$ differs from $(v-u)/m$ by less than $1$.  Since
$0\leq\ell_e-a^e/c<1$,
\begin{align}
 |h_e-\lambda a^e|
 &\leq\left|h_e-\frac{a^e-\ell_e}{m}\right|
       +\frac{\ell_e-a^e/c}{m}\notag\\
 &<1+\frac1m\leq2.\label{eq:four-gap-error}
\end{align}
In particular,
  \[
     |h_e-\lambda a^e|\leq2.
  \]
Put $C=2a^{2m}+1$, the number of integers in
$[-a^{2m},a^{2m}]$.
For each integer $K\geq0$, put $n_K=a^{e_0+K}$.  The gaps with endpoint
exponents $e_0,\ldots,e_0+K$ lie in $[-n_K,n_K]$.  The next gap begins at
$a^{e_0+K+1}/c>n_K$, so every remaining missing integer in this interval
lies in $[-a^{2m},a^{2m}]$.  Hence
$0\leq H(n_K)-\sum_{e=e_0}^{e_0+K}h_e\leq C$, and
\[
\begin{split}
 \left|H(n_K)-\lambda\sum_{e=e_0}^{e_0+K}a^e\right|
 &\leq
 \left|H(n_K)-\sum_{e=e_0}^{e_0+K}h_e\right|
 +\sum_{e=e_0}^{e_0+K}|h_e-\lambda a^e|\\
 &\leq C+2(K+1).
\end{split}
\]
Since
\[
 \sum_{e=e_0}^{e_0+K}a^e
 =\frac{a^{e_0+K+1}-a^{e_0}}{a-1},
 \qquad n_K=a^{e_0+K},
\]
the normalized geometric main term is
\[
 \frac{\lambda\sum_{e=e_0}^{e_0+K}a^e}{2n_K+1}
 =\frac{\lambda}{a-1}
   \frac{a-a^{-K}}{2+a^{-(e_0+K)}}
 \longrightarrow\frac{\lambda a}{2(a-1)}=L.
\]
Also $(C+2(K+1))/(2n_K+1)\to0$, since $n_K=a^{e_0+K}$.
Dividing the preceding error estimate by $2n_K+1$ therefore gives
\[
 \lim_{K\to\infty}\frac{H(n_K)}{2n_K+1}=L.
\]

Now let $n\geq a^{2m}$, and let $E\geq2m$ be the unique integer such that
$a^E\leq n<a^{E+1}$.  Set $X=a^{E+1}$ and $y=n+1$.
Outside $[-a^{2m},a^{2m}]$, every missing integer of magnitude at most $n$
belongs to a gap with right endpoint at most $a^E$ or to the part of
$[X/c,X)$ below $n$.  By \eqref{eq:four-gap-error}, the completed gaps contain at most
  \[
   \lambda\sum_{e=e_0}^{E}a^e+2E
   \leq\frac{\lambda X}{a-1}+2E
  \]
  integers, where the sum is zero if $E<e_0$.

  Write $\ell_{E+1}=\lceil X/c\rceil$.  If $y\leq\ell_{E+1}$, the
  truncated part of the gap is empty.  If $y>\ell_{E+1}$, it consists of
  the integers in $[\ell_{E+1},y)$ in one residue class modulo $m$.
  The residue-class estimate and $\ell_{E+1}\geq X/c$ give
  \[
   \#\{\text{integers in the truncated part}\}
   <\frac{y-\ell_{E+1}}m+1
   \leq\frac{y-X/c}m+1.
  \]
  Thus in both cases the truncated part contains at most
  \[
               \max\left\{0,\frac{y-X/c}m\right\}+1
  \]
  integers. Hence
 \[
   H(n)\leq
   \frac{\lambda X}{a-1}
   +\max\left\{0,\frac{y-X/c}{m}\right\}
   +C+2E+1.
  \]
If $y\leq X/c$, the maximum is zero, and $X\leq ay$ gives
$\lambda X/(a-1)\leq\lambda ay/(a-1)$.  If $y>X/c$, then
$\lambda=(c-1)/(mc)$, $X\geq y$, and $c-a<0$ give
\[
\frac{\lambda X}{a-1}
 +\frac{y-X/c}{m}
 =\frac ym+\frac{c-a}{mc(a-1)}X
 \leq\frac ym+\frac{c-a}{mc(a-1)}y
 =\frac{\lambda ay}{a-1}.
\]
Thus the first two terms on the right side of the bound for $H(n)$ sum to
at most $\lambda ay/(a-1)=2Ly$.  Since $a^E\leq n$ and $E/a^E\to0$, we have
$0\leq E/n\leq E/a^E\to0$.  Dividing the bound for $H(n)$ by $2n+1$,
the term $2Ly/(2n+1)$ tends to $L$, while
 $(C+2E+1)/(2n+1)$ tends to zero.  It follows that
\[
 \limsup_{n\to\infty}\frac{H(n)}{2n+1}\leq L.
\]
The subsequence $n=n_K$ gives the reverse inequality, so
\[
 \limsup_{n\to\infty}\frac{H(n)}{2n+1}
 =\frac{a(c-1)}{2mc(a-1)}.
\]
Since $|S_{m,a,c}\cap[-n,n]|/(2n+1)=1-H(n)/(2n+1)$, taking the lower
limit and using $\liminf(1-u_n)=1-\limsup u_n$ gives
\[
 \underline d_{\Z}(S_{m,a,c})
       =1-\frac{a(c-1)}{2mc(a-1)}.
\]
Together with Lemma~\ref{lem:four-avoidance}, this proves
Theorem~\ref{thm:four-family}.

\begin{proof}[Proof of Corollary~\ref{cor:all-four}]
Take $a=4$ and $c=5/2$.  These values satisfy
\eqref{eq:four-parameters}, and Theorem~\ref{thm:four-family} gives, for
every power of two $m\geq2$,
\[
                         \beta_{\Z}(4)\geq1-\frac2{5m}.
\]
Letting the powers of two $m$ tend to infinity gives
$\beta_{\Z}(4)=1$, since every density is at most one.

Adenwalla \cite[Theorem~6]{Adenwalla} proved $\beta_{\N}(4)=1$.

Upper density is at least lower density.  A monotone progression of length
$\ell\geq4$ contains one of length four, so every four-permutable set is
$\ell$-permutable.  Thus $\beta_X(4)=1$ implies
$\beta_X(\ell)=\alpha_X(\ell)=1$ for $X\in\{\N,\Z\}$, because all four
parameters are at most one.
\end{proof}

\section*{Declaration of generative AI and AI-assisted technologies}

During the preparation of this draft, the author used Codex with GPT-5.6 Sol Ultra to assist with proof development, literature searches, and manuscript editing. The author reviewed and edited the content as needed and takes full responsibility for the content of the article.


\begin{thebibliography}{99}
\setlength{\itemsep}{0pt}

\bibitem{Adenwalla}
S. Adenwalla,
Avoiding monotone arithmetic progressions in permutations of integers,
\emph{Discrete Math.} \textbf{347} (2024), 114183.
\href{https://doi.org/10.1016/j.disc.2024.114183}
{doi:10.1016/j.disc.2024.114183}.

\bibitem{ArdalBrownJungic}
H. Ardal, T. Brown, and V. Jungi\'c,
Chaotic orderings of the rationals and reals,
\emph{Amer. Math. Monthly} \textbf{118} (2011), 921--925.
\href{https://doi.org/10.4169/amer.math.monthly.118.10.921}
{doi:10.4169/amer.math.monthly.118.10.921}.

\bibitem{DavisEtAl}
J. A. Davis, R. C. Entringer, R. L. Graham, and G. J. Simmons,
On permutations containing no long arithmetic progressions,
\emph{Acta Arith.} \textbf{34} (1977), 81--90.
\href{https://doi.org/10.4064/aa-34-1-81-90}
{doi:10.4064/aa-34-1-81-90}.

\bibitem{EntringerJackson}
R. C. Entringer and D. E. Jackson,
Elementary problem E2440,
\emph{Amer. Math. Monthly} \textbf{80} (1973), no.~9, 1058.
\href{https://doi.org/10.2307/2318789}{doi:10.2307/2318789}.

\bibitem{Geneson}
J. Geneson,
Forbidden arithmetic progressions in permutations of subsets of the
integers,
\emph{Discrete Math.} \textbf{342} (2019), 1489--1491.
\href{https://doi.org/10.1016/j.disc.2019.02.004}
{doi:10.1016/j.disc.2019.02.004}.

\bibitem{HiroseSaito}
M. Hirose and S. Saito,
Characterization of order structures avoiding three-term arithmetic
progressions,
\emph{Order} \textbf{42} (2025), 231--239.
\href{https://doi.org/10.1007/s11083-024-09677-7}
{doi:10.1007/s11083-024-09677-7}.

\bibitem{LeSaulnierVijay}
T. D. LeSaulnier and S. Vijay,
On permutations avoiding arithmetic progressions,
\emph{Discrete Math.} \textbf{311} (2011), 205--207.
\href{https://doi.org/10.1016/j.disc.2010.10.006}
{doi:10.1016/j.disc.2010.10.006}.

\bibitem{Nathanson}
M. B. Nathanson,
Permutations, periodicity, and chaos,
\emph{J. Combin. Theory Ser. A} \textbf{22} (1977), 61--68.
\href{https://doi.org/10.1016/0097-3165(77)90063-2}
{doi:10.1016/0097-3165(77)90063-2}.

\end{thebibliography}
\end{document}